\newcommand{\F}{\mathbb{F}}
\newcommand{\aut}{\mathrm{Aut}}
\newcommand{\Fqq}{\mathbb{F}_{q^2}}
\newcommand{\mZ}{\mathbb{Z}}
\newcommand{\cF}{\mathcal{F}}
\newcommand{\cH}{\mathcal{H}}
\numberwithin{equation}{section}
\theoremstyle{plain}
\newtheorem{theorem}[equation]{Theorem}
\newtheorem{corollary}[equation]{Corollary}
\newtheorem{lemma}[equation]{Lemma}
\newtheorem{proposition}[equation]{Proposition}
\theoremstyle{definition}
\theoremstyle{remark}
\begin{document}

\title{Non-isomorphic maximal function fields of genus $q-1$}

\thanks{$^1$ Technical University of Denmark, Kgs. Lyngby, Denmark, jtni@dtu.dk}

\thanks{{\bf Keywords}: Hermitian function field; Maximal function field; Isomorphism classes; Automorphism group}

\thanks{{\bf Mathematics Subject Classification (2010)}: 11G, 14G}

\author{Jonathan Niemann$^1$}

\begin{abstract}
    The classification of maximal function fields over a finite field is a difficult open problem, and even determining isomorphism classes among known function fields is challenging in general. We study a particular family of maximal function fields defined over a finite field with $q^2$ elements, where $q$ is the power of an odd prime. When $d := (q+1)/2$ is a prime, this family is known to contain a large number of non-isomorphic function fields of the same genus and with the same automorphism group. We compute the automorphism group and isomorphism classes also in the case where $d$ is not a prime.
\end{abstract}

\maketitle

\section{Introduction}

Function fields over finite fields with many rational places have been studied extensively in the past decades, partly due to the role they play in constructing error-correcting codes with good parameters. The number of rational places of such a function field is bounded from above by the Hasse-Weil bound. In fact, if $\cF$ is a function field defined over $\Fqq$, then 
$$
    N(\cF) \leq q^2 + 1 + 2g(\cF)q,
$$
where $g(\cF)$ is the genus of $\cF$ and $N(\cF)$ is the number of places of degree one over $\Fqq$. A function field attaining this bound is called $\Fqq$-maximal, and the classification of all $\Fqq$-maximal function fields is a big open problem. 

Perhaps the most studied example of a $\Fqq$-maximal function field is the Hermitian function field
$$
    \cH := \Fqq(x,y) \ \text{ with } y^{q+1} = x^q + x.
$$

It has genus $g(\cH) = q(q-1)/2$, which is largest possible for a maximal function field over $\Fqq$, and any other maximal function field with the same genus is isomorphic to $\cH$ (see \cite{ihara_some_remarks_1982} and \cite{ruck_characterization_1994}). Moreover, the automorphism group of $\cH$ is exceptionally large; it is isomorphic to $\mathrm{PGU}(3,q)$ which has order $q^3(q^2-1)(q^3+1)$.

%and this makes $\cH$ even more special.

Any subfield of a maximal function field is again maximal (see \cite{Serre}), and the subfields of $\cH$ corresponding to subgroups of $\mathrm{PGU}(3,q)$ have turned out to be a rich source of examples of maximal function fields (see e.g. \cite{garcia_subfields_2000}). In many cases, these examples come in families of function fields with the same genus, and it is natural to ask for a description of the isomorphism classes within such families. 

It is, in general, a difficult task to determine whether two function fields of the same genus are isomorphic or not. Knowing also the automorphism group is sometimes sufficient, but there are examples of non-isomorphic maximal function fields of the same genus with isomorphic automorphism groups. 

One such example is given by the family of function fields corresponding to the curves studied in \cite{giulietti_m=2_curves_2006}. Let $q$ be a power of an odd prime such that $d = (q+1)/2 > 3$ is prime. Then these function fields are of the form
$$
    \cF_i := \Fqq(x,y) \ \text{ with } y^{q+1} = x^{2i}(x^2 + 1),
$$
for $1 \leq i \leq d-2$. They are subfields of the Hermitian (see \cite[Example 6.4]{garcia_subfields_2000}), and the isomorphism classes and automorphism groups were determined in \cite{giulietti_m=2_curves_2006}. 

Another example of non-isomorphic maximal function fields of the same genus was given in \cite{beelen_families_2024}. For $q$ a power of an odd prime and $d = (q+1)/2$, not necessarily prime, these function fields are of the form
$$
    F_j := \Fqq(x,y) \ \text{ with } y^d = x^j(x^2 + 1),
$$
for $j \in \mZ$ with $\gcd(j(j+2),d) =1$. They are subfields of the Hermitian (see \cite[Example 6.4]{garcia_subfields_2000}, and the isomorphism classes as well as the automorphism groups were described in \cite{beelen_families_2024}, except for the automorphism group of $F_{(d-2)/2}$ which is still unknown. 

%In both of the example mentioned above, the function fields are subfields of the Hermitian arising as fixed fields of certain subgroups of the automorphism group of $\cH$ (see \cite[Example 6.4]{garcia_subfields_2000}).

In this paper, we will extend most of the results of \cite{giulietti_m=2_curves_2006} to also include the case where $d = (q+1)/2$ is not a prime. In particular, we will determine the isomorphism classes and the full automorphism group for the function fields $\{\cF_i\}_i$, and count the number of isomorphism classes. The structure of the automorphism group is given in Theorem \ref{thm:aut}, while the results regarding the isomorphism classes are collected in Theorem \ref{thm:main_iso_classes} and Theorem \ref{thm:number_iso_classes}.

The core idea is to consider the degree two subfields of $\cF_i$. It turns out that these subfields are, in many cases, isomorphic to fields of the form $F_{j}$, as defined above. Our results then follow from combining the findings of \cite{beelen_families_2024} with a careful study of the automorphism group of $\cF_i$.   

The paper is organized as follows: In Section \ref{sec:the_function_fields} we recall some initial observations regarding the function fields $\{\cF_i\}_i$. This includes a description of some divisors, automorphisms and in particular a number of explicit isomorphisms among the function fields. The rest of the paper is then concerned with showing that no other isomorphisms exist. In Section \ref{sec:subext} we describe the degree two subfields mentioned above, and in Section \ref{sec:weierstrass} we obtain partial results regarding the the Weierstrass semigroups at some special rational places. The automorphism group of $\cF_i$ is completely determined in Section \ref{sec:aut}, and finally the isomorphism classes are described and counted in Section \ref{sec:iso}.

\section{The function fields $\cF_i$}\label{sec:the_function_fields}

Let $q$ be the power of an odd prime and define $d = (q+1)/2$. We study the family of function fields of the form $\mathcal{F}_i := \F_{q^2}(x,y)$ where 
\begin{equation}\label{eq:Fi}
    y^{q+1} = x^{2i}(x^2 + 1),
\end{equation}
for $i\in\mZ$ with $\gcd(i(i+1),d) = 1$. By making the change of variables $y' := ay$, for some $a\in \Fqq$ satisfying $a^{q+1}=-1$, we see that $\cF_i$ belongs to the class the function fields considered in \cite[Example 6.4, Case 2]{garcia_subfields_2000}. It follows that $\cF_i$ is a subfield of the Hermitian function field, and hence $\Fqq$-maximal with $p$-rank zero (see \cite[Lemma 9.73]{hirschfeld_algebraic_2008}). Moreover, the genus of $\cF_i$ is $q-1$, since we are assuming $\gcd(i(i+1),d)=1$. In \cite{giulietti_m=2_curves_2006} these function fields were studied in the case where $d$ is prime. In this section, we recall some properties of $\cF_i$ that hold for any $d$. 

%which we will use thorughout the rest of the paper.

\subsection{Some divisors and special places}\label{sec:divisors_and_omega}

Let $\alpha \in \Fqq$ be some element satisfying $\alpha^2 = -1$. By considering $\cF_i$ as a Kummer extension of $\Fqq(x)$ (see \cite[Proposition 3.7.3]{Sti}), we determine the following divisors in $\cF_i$:
\begin{align}\label{eq:divisors}
\begin{split}
    (x) &= d(P_0^1 + P_0^2) - d(P_\infty^1 + P_\infty^2) \\
    (y) &= i (P_0^1 + P_0^2) + (P_\alpha + P_{-\alpha}) - (i+1)(P_\infty^1 + P_\infty^2), \text{ and } \\
    (dx) &= (d-1) (P_0^1 + P_0^2) + Q (P_\alpha + P_{-\alpha}) - (d+1)(P_\infty^1 + P_\infty^2),
\end{split}
\end{align}

where $P_0^1$ and $P_0^2$ (respectively $P_\infty^1$ and $P_\infty^2$) are the places lying above the zero (respectively pole) of $x$ in $\Fqq(x)$, and $P_\alpha$ (respectively $P_{-\alpha}$) is the place lying above the zero of $(x-\alpha)$ (respectively $(x+\alpha)$). We denote the set of these six places by $\Omega$. \newline

In Section \ref{sec:weierstrass} we will describe the gapsequences of the places of $\Omega$. The key to obtaining this description is the connection between gaps and regular differentials given by the following result:

\begin{proposition}\cite[Corollary 14.2.5]{villa_salvador_topics_2006}\label{prop:reg_diff_gap}
    Let $F$ be an algebraic function field of genus $g$ over some field $K$. Let $P$ be a place of $F$ and $\omega$ a regular differential on $F$. Then $v_P(\omega) + 1$ is a gap at $P$.
\end{proposition}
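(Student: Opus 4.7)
The plan is to translate the gap condition at $P$ into a statement about dimensions of spaces of regular differentials via Riemann--Roch, and then exhibit the required drop in dimension using the given differential $\omega$.

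First, I would recall the standard characterization of gaps: a positive integer $n$ is a gap at $P$ exactly when $\ell(nP) = \ell((n-1)P)$, i.e.\ no new function appears when the allowed pole order at $P$ is raised by one. Writing $i(D) := \dim \Omega(D)$ with $\Omega(D) = \{\omega : (\omega) \geq D\}$, and subtracting the Riemann--Roch formulas for $nP$ and $(n-1)P$, one obtains
$$\ell(nP) - \ell((n-1)P) \;=\; 1 + i(nP) - i((n-1)P).$$
Since the left-hand side is $0$ or $1$, the gap condition becomes the statement $i(nP) = i((n-1)P) - 1$, i.e.\ the space of regular differentials vanishing to order at least $n$ at $P$ has codimension one inside the space of regular differentials vanishing to order at least $n-1$.

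Second, given the regular differential $\omega$, I would set $m := v_P(\omega)$ and observe that $\omega \in \Omega(mP)$ but $\omega \notin \Omega((m+1)P)$, so $\Omega((m+1)P) \subsetneq \Omega(mP)$ is a strict inclusion. On the other hand, passing from $\Omega(mP)$ to $\Omega((m+1)P)$ imposes a single extra vanishing condition at $P$, so the codimension is at most one. Hence the codimension is exactly one, i.e.\ $i((m+1)P) = i(mP) - 1$.

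Combining the two observations, $n := m + 1 = v_P(\omega) + 1$ satisfies the gap criterion from the first step, and is therefore a gap at $P$. There is no real obstacle: the proof is a direct application of Riemann--Roch, and the only care needed is bookkeeping with the convention $\Omega(D) = \{\omega : (\omega) \geq D\}$, so that a regular differential $\omega$ (regular everywhere else being automatic) lies in $\Omega(mP)$ precisely when $v_P(\omega) \geq m$.
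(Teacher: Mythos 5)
Your argument is correct, and it is the standard proof of this fact: characterize a gap $n$ at $P$ by $\ell(nP)=\ell((n-1)P)$, convert this via Riemann--Roch into the condition $i(nP)=i((n-1)P)-1$ on spaces of regular differentials, and then note that a regular $\omega$ with $v_P(\omega)=m$ shows the inclusion $\Omega((m+1)P)\subsetneq\Omega(mP)$ is strict while the extra vanishing condition makes the codimension at most one. Note that the paper itself offers no proof here --- the proposition is quoted from the literature (Villa Salvador, Corollary 14.2.5) --- so your write-up simply supplies the standard self-contained argument that the citation stands in for; there is no competing method in the paper to compare against. The only point worth flagging is that your bookkeeping ($\deg(nP)-\deg((n-1)P)=1$, and the ``one extra vanishing condition'' giving codimension at most one) implicitly assumes $P$ has degree one, i.e.\ residue field equal to the constant field $K$; for a place of higher degree both quantities become $\deg P$ and the statement needs reinterpretation. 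Since every place at which the paper applies this proposition (the places of $\Omega$) is rational, this assumption is harmless in context, but you should state it if you intend the proposition in the generality in which it is phrased.
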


In the special case $i = 1$ we will use the information on the semigroups to determine the automorphism group of $\cF_1$.

\subsection{First observations regarding the automorphism groups}

%Describing the automorphism group of $\cF_i$, i.e., the $\overline{\F}_{q^2}$-automorphism group, turns out to be an important step towards determining the isomorphism classes in $\{\cF_i\}$. 

We write $\aut(\cF_i)$ for the $\overline{\F}_{q^2}$-automorphism group of $\overline{\F}_{q^2}\cF_i$. Note that this is the same as the $\F_{q^2}$-automorphism group since $\cF_i$ is $\F_{q^2}$-maximal. We immediately find that $\aut(\cF_i)$ contains a subgroup isomorphic to $\mZ_2 \times \mZ_{q+1}$. Indeed, we have
$$
    H_i := \{ \sigma : (x,y) \mapsto (ax,by) \mid a,b\in \F_{q^2}, a^2 = b^{q+1} = 1\} \subseteq \aut(\cF_i).
$$

Note that $d$ is odd since $\gcd(i(i+1),d) = 1$, so $q+1 \equiv 2 \pmod 4$. This means that the unique Sylow $2$-group of $H_i$ is isomorphic to $\mZ_2 \times \mZ_2$. In particular, $H_i$ contains three involutions that give rise to three subfields, $F$, $F'$, and $F''$, of $\cF_i$ satisfying $[\cF_i : F] =[\cF_i : F'] =[\cF_i : F''] = 2$. We will study these subfields further in Section \ref{sec:subext}. In fact, it turns out that they are isomorphic to fields of the type studied in \cite{beelen_families_2024}, and this will be the key to understanding the isomorphism classes of $\{\cF_i\}_i$.

\subsection{Explicit isomorphisms}\label{sec:explicit_iso}

We will determine the isomorphism classes in $\{\cF_i\}_{i}$ by pointing out a number of explicit isomorphisms and then showing that no more isomorphisms exist. The explicit isomorphisms are similar to those described in \cite[Section 7]{giulietti_m=2_curves_2006}:\newline

If $i \equiv j \pmod d$ then $\cF_i$ is isomorphic to $\cF_j$. Indeed, write $j = md + i$ for some $m\in \mZ$, then $\varphi: \cF_i \to \cF_j$ given by $(x,y) \mapsto (x,y/x^m)$ is an isomorphism. Similarly, if $i \equiv -j - 1 \mod d$ then $\cF_i$ is isomorphic to $\cF_j$. The isomorphism is given by $(x,y) \mapsto (1/x, y/x^m)$ where $m\in \mZ$ is chosen such that  $i = md - j - 1$. This means that we can limit ourselves to studying the function fields corresponding to $i = 1, \dots, \frac{d-1}{2}$ where $\gcd(i(i+1),d)=1$.  \newline

Now choose $a\in \Fqq$ such that $a^{q+1} = -1$. We describe the rest of the explicit isomorphisms at the same time: \newline 

If $i,j \in \mZ$ with $\gcd(i(i+1),d) = \gcd(j(j+1),d) = 1$ satisfy either

\begin{alignat*}{2}
    &(1)& \quad ij         &\equiv 1 \pmod d, \\
    &(2)& \quad ij + i + 1 &\equiv 0 \pmod d, \\
    &(3)& \quad ij + i + j &\equiv 0 \pmod d, \text{ or } \\
    &(4)& \quad ij + j + 1 &\equiv 0 \pmod d, 
\end{alignat*}

then $\cF_i$ and $\cF_j$ are isomorphic and an isomorphism from $\cF_i$ to $\cF_j$ is given by respectively

\begin{alignat*}{3}
    &(1)&  \quad(x,y) \mapsto \left(\frac{a^dy^d}{x^{j}}, \frac{a^{i+1}y^{i}}{x^r}\right), \ & \text{ with } r := (ij - 1)/d, \\
    &(2)& \quad (x,y) \mapsto \left(\frac{x^j}{a^dy^d}, \frac{x^r}{a^iy^{i+1}}\right), \ & \text{ with }  r := (ij + i + 1)/d, \\
    &(3)&  \quad(x,y) \mapsto \left(\frac{x^{j+1}}{a^dy^d}, \frac{x^r}{a^iy^{i+1}}\right), \ & \text{ with }  r := (ij + i + j)/d, \text{ and }\\
    &(4)&  \quad (x,y) \mapsto \left(\frac{a^dy^d}{x^{j+1}}, \frac{a^{i+1}y^{i}}{x^r}\right), \ & \text{ with } r := (ij + j + 1)/d.
\end{alignat*}

\begin{comment}
\begin{alignat*}{2}
    &(1)& \quad ij + i + j &\equiv 0 \pmod d, \\
    &(2)& \quad ij + i + 1 &\equiv 0 \pmod d, \\
    &(3)& \quad ij + j + 1 &\equiv 0 \pmod d, \text{ or } \\
    &(4)& \quad ij         &\equiv 1 \pmod d,
\end{alignat*}

then $\cF_i$ and $\cF_j$ are isomorphic and an isomorphism from $\cF_i$ to $\cF_j$ is given by respectively

\begin{alignat*}{3}
    &(1)&  \quad(x,y) \mapsto \left(\frac{x^{j+1}}{a^dy^d}, \frac{x^r}{a^iy^{i+1}}\right), \ & \text{ with }  r := (ij + i + j)/d, \\
    &(2)& \quad (x,y) \mapsto \left(\frac{x^j}{a^dy^d}, \frac{x^r}{a^iy^{i+1}}\right), \ & \text{ with }  r := (ij + i + 1)/d, \\
    &(3)&  \quad (x,y) \mapsto \left(\frac{a^dy^d}{x^{j+1}}, \frac{a^{i+1}y^{i}}{x^r}\right), \ & \text{ with } r := (ij + j + 1)/d,  \text{ and }\\
    &(4)&  \quad(x,y) \mapsto \left(\frac{a^dy^d}{x^{j}}, \frac{a^{i+1}y^{i}}{x^r}\right), \ & \text{ with } r := (ij - 1)/d.
\end{alignat*}
\end{comment}

In Section \ref{sec:iso} we will show that there are no other isomorphisms. For now, note that $(3)$ gives rise to an isomorphism between $\cF_1$ and $\cF_{(d-1)/2}$, so we can limit our considerations to $i = 1, \dots, (d-3)/2$, satisfying $\gcd(i(i+1),2) =1$. We will continue with this simplification throughout the rest of the paper, except in the case $q=5$ where $(d-1)/2 = 1$. We will treat this case separately in the next section, after making some remarks regarding other special cases.

\subsection{The special cases}\label{sec:special} There are two cases where the isomorphisms described above immediately give rise to extra automorphisms. \newline 

If $i^2 + i + 1 \equiv 0 \pmod d$ then the isomorphism from $(2)$ gives rise to an extra automorphism of the form
$$
    \omega: (x,y) \mapsto \left( \frac{x^i}{a^d y^d}, \frac{x^r}{a^iy^{i+1}} \right),
$$
where $r := (i^2 + i + 1)/d$ and $a$ is as above. It can be checked directly that this automorphism has order three, and that it acts as a 3-cycle on the subfields $F$, $F'$, and $F''$. 

\begin{comment}
Moreover, $\omega$ normalizes $H_i$ since for $h \in H_i$ we have 
$$
    \omega^{-1} h \omega \mid_F \in \aut(F), 
$$
so $h \omega = \omega h'$ for some $h' \in H_i$ (here we are using that $F \simeq F_1$ does not occur for $p=3$). This means that $\widetilde{H}_i := \langle \omega, H_i \rangle \simeq \langle\omega\rangle \rtimes H_i$, so it is a subgroup of $G$ of order $3(q+1)$ and we note that it contains no more involutions than those coming from $H_i$. \newline
\end{comment}

Similarly, if $i = 1$ then the isomorphism from $(1)$ gives rise to an extra automorphism 
$$
    \omega_1: (x,y) \mapsto \left( \frac{a^dy^d}{x}, a^2y\right).
$$

By pre-composing with the automorphism $(x,y) \mapsto (\pm x, 1/a^2 y)$ from $H_1$, we obtain two extra involutions in $\aut(\cF_1)$, namely
$$
    \pi : (x,y) \mapsto \left( \frac{a^dy^d}{x},y\right),
$$
and
$$
    \pi' : (x,y) \mapsto \left( -\frac{a^dy^d}{x},y\right).
$$
%(see \cite[Lemma 5.2]{giulietti_m=2_curves_2006}).

The case $q=5$ is extra special; we have $d = 3$, so for $i=1$ we get additional automorphisms from both $(2)$ and $(1)$. The genus is $q-1 = 4$, which is equal to second largest possible genus for a maximal curve over $\mathbb{F}_{5^2}$, so $\cF_1$ is isomorphic to the function field $\mathbb{F}_{5^2}(s,t)$ defined by $t^3 = s^5 + s$ (see \cite[Theorem 3.1]{fuhrmann_maximal_1997}). The automorphism group of this function field is known to be a group of order $360 = 60(q+1)$, and it is isomorphic to the semidirect product of a cyclic group of order $3$ and $\mathrm{PGL}(2,5)$ (see \cite[Theorem 12.11]{hirschfeld_algebraic_2008}). The number of isomorphism classes in $\{\cF_i\}_i$ is just one for $q=5$. Since this case is now completely settled, we will often assume $q > 5$ in the following to simplify matters.

%Using Magma we find that $\cF_1$ is isomorphic to a norm-trace curve with equation $y^3 = x^5 + x$. of the type studied in \cite{bonini_montanucci_zini_plane_2020}. 
%Hence, it follows from the results of \cite{bonini_montanucci_zini_plane_2020} that the automorphism group is the semidirect product of a cyclic group of order $3$ and $\mathrm{PGL}(2,5)$, which has order $360 = 60(q+1)$. 

\section{Three subfields of $\cF_i$ of degree two}\label{sec:subext}

Assume for the rest of this section that $q > 5$. For a fixed index $i$, satisfying $1\leq i \leq \frac{d-3}{2}$ and $\gcd(i(i+1),d)=1$, we describe the three subfields associated to the involutions of $H_i$. We claim that each of them is isomorphic to a function field of the form $F_j := \F_{q^2}(z,t)$ with
$$
    z^d = t^j(t^2+1),
$$
where $1 \leq j \leq \frac{d-3}{2}$ or $j = d-1$ and $\gcd(j(j+2),d)=1$. These are function fields of the type studied in \cite{beelen_families_2024}. \newline

First, we find a degree two subfield fixed by the involution $\sigma_0:(x,y) \mapsto (x,-y)$. Let $t_0 := y^2$ and note that 
$$
    t_0^d = x^{2i}(x^2+1).
$$
This shows that the subfield $\F_{q^2}(x,t_0) \subseteq \cF_i$ is isomorphic to $F_{2i}$. If $1\leq 2i \leq \frac{d-3}{2}$ we are done since the $\gcd$-condition follows from the $\gcd$-assumption on $i$.

Otherwise, we use the isomorphism from \cite[Lemma 3.2]{beelen_families_2024}: Define $\tilde{x} := 1/x$ and $\tilde{t}_0 := t_0/x$ and note that
$$
    \tilde{t}_0^d = \tilde{x}^{d-2i-2}(\tilde{x}^2+1).
$$
This shows that $\F_{q^2}(x,t_0) = \F_{q^2}(\tilde{x},\tilde{t}_0) \subseteq \cF_i$ is isomorphic to $F_{d-2i-2}$. Since $\frac{d-1}{2} \leq 2i \leq d-3$ (using that $d$ is odd), we have 
$$
    d-2-(d-3) \leq d-2i-2 \leq d-2-\frac{d-1}{2},
$$
i.e.
$$
    1  \leq d-2i-2 \leq \frac{d-3}{2}.
$$

Moreover, 
$$
    \gcd\left((d-2i-2)(d-2i),d\right) = \gcd\left(2i(2i+2),d\right) = \gcd\left(i(i+1),d\right) = 1,
$$
since $d$ is odd. This finishes the proof of the claim for $\sigma_0$. \newline 

For the two other involutions of $H_i$ we need to consider several different cases. Since $\gcd(i(i+1),d)=1$, there is a unique $j \in \{1, \dots, d-1\}$ such that $j$ is an inverse of $i$ modulo $d$. The first two cases depend on whether $j$ is in $\{1, \dots, \frac{d-1}{2}\}$ or in $\{\frac{d+1}{2}, \dots, d-1\}$. Case 3 and 4 depend instead on the inverse of $i+1$ modulo $d$. In each case, the last part of the argument above is needed, but we will not repeat it. \newline

\textbf{Case 1:} Suppose there exists $j\in \mZ$ such that $1\leq j \leq \frac{d-1}{2}$ and  $ij \equiv 1 \pmod d$. If $j = \frac{d-1}{2}$, then $i \equiv 2 \pmod d$, but this is in contradiction with our assumption on $i$, so we may assume $1 \leq j \leq \frac{d-3}{2}$. We now use the isomorphism $(1)$ given in Section \ref{sec:explicit_iso}.

Define $r := \frac{ij-1}{d}$ and pick $a \in \F_{q^2}$ such that $a^{q+1} = -1$. Further, define $x_1 := \frac{a^d y^d}{x^i}$ and $y_1 := \frac{a^{j+1} y^j}{x^r}$. Then, one can check directly that
$$
    y_1^{q+1} = x_1^{2j}(x_1^2 + 1).
$$

Proceeding like above, we define $t_1 := y_1^2$ and obtain a subfield isomorphic to $F_{2j}$. Note that the $\gcd$-condition is satisfied for $2j$ and $2j+2$: \newline

It follows from $ij \equiv 1 \pmod d$ that $\gcd(2j,d)=1$. Since $(j+1)(i+1) \equiv (i + 1) + (j + 1) \pmod d$ and $\gcd((i+1),d)=1$ we also get $\gcd(2j+2,d)=\gcd(j+1,d)=1$. \newline

This means we can copy the argument above and finish the proof of the claim in this case. From the explicit description we see that this subfield is fixed by $\sigma_1:(x,y) \mapsto (-x,y)$ if $i$ is even and $\sigma_2:(x,y) \mapsto (-x,-y)$ if $i$ is odd. \newline

\textbf{Case 2:} Suppose there exists $j_0 \in \mZ$ such that $\frac{d+1}{2} \leq j_0 \leq d-1$ and $ij_0 \equiv 1 \pmod d$. Note that $j_0 = d-1$ would imply $i\equiv -1 \pmod d$ which is impossible since we assume $1\leq i \leq \frac{d-3}{2}$. Using this, we get that $j := d-(j_0+1)$ satisfies
$$
    1\leq j \leq \frac{d-3}{2},
$$
and
$$
    ij + i + 1 \equiv -ij_0 - i + i + 1 \equiv 0 \mod d.
$$

We now use the isomorphism $(2)$ given in Section \ref{sec:explicit_iso}. Define $r := (ij + i + 1)/d$, $a$ like above, $x_2 := \frac{x^i}{a^d y^d}$, and $y_2 := \frac{x^r}{a^j y^{j+1}}$. Then, we have

$$
    y_2^{q+1} = x_2^{2j}(x_2^2 + 1).
$$

Proceeding as before we define $t_2 := y_2^2$ and obtain a subfield isomorphic to $F_{2j}$.  The $\gcd$-condition is satisfied since
$$
    \gcd(2j(2j+2),d) = \gcd(j(j+1),d) = \gcd(j_0(j_0+1),d) = 1,
$$

and we finish with the same argument as previously. Note that this subfield is also fixed by $\sigma_1:(x,y) \mapsto (-x,y)$ if $i$ is even and $\sigma_2:(x,y) \mapsto (-x,-y)$ if $i$ is odd. \newline

\textbf{Case 3:} Suppose there exists $j_0 \in \mZ$ such that $1 \leq j_0 \leq \frac{d-1}{2}$ and $(i+1)j_0 \equiv 1 \pmod d$. Note that $j_0 = 1$ would imply $i \equiv 0 \pmod d$ which is impossible. Using this, we get that $j := j_0-1 $ satisfies
$$
    1\leq j \leq \frac{d-3}{2},
$$
and
$$
    ij + i + j \equiv ij_0 - i + i + j_0 - 1 \equiv 0 \mod d.
$$

We now use the isomorphism $(3)$ given in Section \ref{sec:explicit_iso}. Define $r := (ij + i + j)/d$, $a$ like above, $x_3 := \frac{x^{i+1}}{a^d y^d}$, and $y_3 := \frac{x^r}{a^j y^{j+1}}$. Then, we have

$$
    y_3^{q+1} = x_3^{2j}(x_3^2 + 1).
$$

Proceeding like above we define $t_3 := y_3^2$ and obtain a subfield isomorphic to $F_{2j}$.  The $\gcd$-condition is satisfied since
$$
    \gcd(2j(2j+2),d) = \gcd(j(j+1),d) = \gcd((j_0-1)j_0,d) = \gcd(ij_0^2,d) = 1,
$$

and we are again in a situation where we can easily finish the argument. This subfield is fixed by $\sigma_1:(x,y) \mapsto (-x,y)$ if $i$ is odd and $\sigma_2:(x,y) \mapsto (-x,-y)$ if $i$ is even. \newline

\textbf{Case 4:} Suppose there exists $j_0 \in \mZ$ such that $\frac{d+1}{2} \leq j_0 \leq d-1$ and $(i+1)j_0 \equiv 1 \pmod d$. Now, $j := -j_0+d $ satisfies
$$
    1\leq j \leq \frac{d-1}{2},
$$
and
$$
    ij + j + 1 \equiv -ij_0 - j_0 + 1 \equiv 0 \mod d.
$$

We now use the isomorphism $(4)$ given in Section \ref{sec:explicit_iso}. Define $r := (ij  + j+1)/d$, $a$ like above, $x_4 := \frac{a^d y^d}{x^{i+1}}$, and $y_4 := \frac{a^{j+1} y^j}{x^r}$. Then, we have

$$
    y_4^{q+1} = x_4^{2j}(x_4^2 + 1).
$$

Proceeding like before, we define $t_4 := y_4^2$ and obtain a subfield isomorphic to $F_{2j}$.  The $\gcd$-condition is satisfied since
$$
    \gcd(2j(2j+2),d) = \gcd(j(j+1),d) = \gcd(j_0(1-j_0),d) = \gcd(ij_0^2,d) = 1.
$$

If $\1 \leq 2j \leq \frac{d-3}{2}$ or $2j = d-1$ we are done. Otherwise we copy the argument from previously. Note that this subfield is also fixed by $\sigma_1:(x,y) \mapsto (-x,y)$ if $i$ is odd and $\sigma_2:(x,y) \mapsto (-x,-y)$ if $i$ is even. \newline

By combining all of the above we have proven our claim; each of the three subfields corresponding to the involutions of $H_i$ are isomorphic to a function field of the form $F_j$ where $1 \leq j \leq \frac{d-3}{2}$ or $j = d-1$ and, in both cases, $\gcd(j(j+2),d)=1$. \\

The isomorphism classes in the family $\{F_i\}_i$ were described in \cite{beelen_families_2024}, and we use these results to obtain two useful lemmas: 

%If $\cF_{i_1}$ and $\cF_{i_2}$ have isomorphic subfields arising in this way we can say a lot about the indices $i_1$ and $i_2$:
%This observation will be useful for determining isomorphism classes in $\{\cF_i\}_i$. 
%The following lemma is a first indication of why this observation is useful for determining isomorphism classes in $\{\cF_i\}_i$.
%We can say exactly when $\cF_{i_1}$ and $\cF_{i_2}$ have isomorphic subfields arising in this way:
%We can say a lot about the indices of two fields from $\{\cF_i\}_i$ that have isomorphic subfields arising in this way:

\begin{lemma} \label{lemma:iso_subfields_onlyif}
    Assume $i_1$ and $i_2$ satisfy $1\leq i_1,i_2 \leq \frac{d-3}{2}$ and $\gcd(i_1(i_1+1),d)=\gcd(i_2(i_2+1),d)=1$. Let $F'$ be a subfield of $\cF_{i_1}$ associated to an involution of $H_{i_1}$ and let $F''$ be a subfield of $\cF_{i_2}$ associated to an involution of $H_{i_2}$. If $F'$ is isomorphic to $F''$ then either
    \begin{align*}
        i_1i_2 \equiv 0 &\pmod d,\\
        i_1i_2 + i_1 + i_2 \equiv 0 &\pmod d,\\
        i_1i_2 + i_1 + 1 \equiv 0 &\pmod d,\\
        i_1i_2 + i_2 + 1 \equiv 0 &\pmod d,
    \end{align*}
    or we have $i_1 = i_2$.
\end{lemma}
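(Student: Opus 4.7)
The plan is to combine the description of the degree-two subfields from Section~\ref{sec:subext} with the classification of isomorphism classes in the family $\{F_j\}$ established in \cite{beelen_families_2024}.

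First I would recall from the case analysis of Section~\ref{sec:subext} that for each $k \in \{1,2\}$, every subfield of $\cF_{i_k}$ fixed by an involution of $H_{i_k}$ is isomorphic to some function field $F_{2m_k}$, where $m_k$ belongs to the three-element set
$$
    S(i_k) = \{\, i_k,\ j'(i_k),\ j''(i_k)\,\}.
$$
Here $j'(i_k)$ is determined from the multiplicative inverse of $i_k$ modulo $d$ (via Cases~1 and~2) and $j''(i_k)$ from the multiplicative inverse of $i_k+1$ modulo $d$ (via Cases~3 and~4). Thus the hypothesis $F' \cong F''$ forces $F_{2m_1} \cong F_{2m_2}$ for some choice of $m_1 \in S(i_1)$ and $m_2 \in S(i_2)$.

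Next, I would invoke the classification of isomorphism classes within $\{F_j\}$ from \cite{beelen_families_2024} to list the possible congruences relating $2m_1$ and $2m_2$ modulo $d$. Substituting back the defining congruences for $j'(i_k)$ and $j''(i_k)$ in each instance produces a polynomial relation between $i_1$ and $i_2$ modulo $d$. Since $d$ is odd, the factor of $2$ introduced by passing through $F_{2m_k}$ can be cleared, and each resulting identity is shown by direct manipulation to factor into one of the four shapes appearing in the statement, or to collapse to $i_1 \equiv i_2 \pmod d$, which under our range restriction forces $i_1 = i_2$.

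The main obstacle is the combinatorial bookkeeping: there are up to nine pairs $(m_1, m_2) \in S(i_1) \times S(i_2)$ to consider, and for each pair the classification in \cite{beelen_families_2024} supplies several candidate congruences. Additional care is required when $2m_k$ leaves the normalized range $\{1,\ldots,(d-3)/2\}$, in which case the involution $j \mapsto d-j-2$ provided by \cite[Lemma~3.2]{beelen_families_2024} must be applied before the classification can be invoked. I expect these reductions to yield precisely the four congruences in the statement (together with the equality $i_1 = i_2$), with no additional relations surviving.
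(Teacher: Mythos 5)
Your proposal follows essentially the same route as the paper: express each involution-fixed subfield as some $F_{2j}$ (applying the index flip $j \mapsto d-j-2$ when the index leaves the normalized range), invoke the isomorphism classification of \cite{beelen_families_2024} to force a congruence between the indices modulo $d$, and then carry out the finite case-by-case substitution of $j$ in terms of $i_1$, $i_2$ and their inverses to land on the stated congruences or $i_1 = i_2$. The paper's own proof does exactly this bookkeeping (displaying a representative sample of the cases), so your plan is correct and matches it.
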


\begin{proof}
    %We know from the discussion in the beginning of this section that $\cF_{i_1}$ has a degree two subfield isomorphic to $F_{2i_1}$ or $F_{d-2i-2}$. We proceed with the case with $F_{2i_1}$, but the other case follows in a similar way.  
    
    %By assumption, $\cF_{i_2}$ must also have a degree two subfield isomorphic to $F_{2i_1}$. Moreover, by Corollary \ref{cor:iso_subext}, this means that one of the three subfields of $\cF_{i_2}$ described in Section \ref{sec:subext} must be isomorphic to $F_{2i_1}$. 

    For each of $F'$ and $F''$ we can go through the cases mentioned in the above discussion, in combination with Theorem 5.1 and 5.2 from \cite{beelen_families_2024}. This leaves us with only a finite number of cases to check: \newline

    We know that $F'$ is isomorphic to either $F_{2j_1}$ or $F_{d-2j_1-2}$ where either $j_1 = i_1$ or $j_1$ is equal to the $j$ that appeared in one of the four cases discussed above. Similarly, $F''$ is isomorphic to either $F_{2j_2}$ or $F_{d-2j_2-2}$, with $j_2$ equal to $j$ as in one of the four cases or $j_2=i_2$. In any case, the results of \cite{beelen_families_2024} imply that the indices, $2j_1$ or $d-2j_1-2$, and, $2j_2$ or $d-2j_2-2$, must be equal modulo $d$. This amounts to four cases, but in the end it means that either

    \begin{align*}
        j_2 \equiv j_1 &\pmod d, \text{ or }\\
        -j_2-1 \equiv j_1  &\pmod d.\\
    \end{align*}
    
    On the other hand, if we go through the cases above, we see that either

    \begin{align*}
        i_1 \equiv j_1 &\pmod d, &(\text{the } \sigma_0 \text{ case)}\\
        i_1^{-1} \equiv j_1 &\pmod d, &(\text{Case 1})\\
        -i_1^{-1}-1 \equiv j_1 &\pmod d, &(\text{Case 2})\\
        (i_1+1)^{-1} - 1\equiv j_1 &\pmod d,\text{ or } &(\text{Case 3}) \\
        -(i_1+1)^{-1} \equiv j_1 &\pmod d. &(\text{Case 4})\\
    \end{align*}

    We have something similar for $j_2$ (replacing $i_1$ by $i_2$). To finish the proof, one now has to go through all the cases and check that we arrive at one of the equivalences from the statement of the theorem, or $i_1 = i_2$. We give a few examples: \newline

    \begin{itemize}
        \item  If $i_1 \equiv i_2 \pmod d$ then $i_1 = i_2$, since $1 \leq i_1,i_2 \leq \frac{d-1}{2}$. \\
        \item  If $i_1 \equiv i_2^{-1} \pmod d$ then $i_1 i_2 \equiv 1 \pmod d$.\\
        \item If $i_1 \equiv -i_2^{-1} - 1 \pmod d$ then $i_1i_2 + i_2 + 1 \equiv 0 \pmod d$.\\
        \item If $i_1 \equiv (i_2 + 1)^{-1} - 1 \pmod d$ then $i_1i_2 + i_1 + i_2 \equiv 0 \pmod d$.\\
        \item If $i_1 \equiv -(i_2+1)^{-1} \pmod d$ then $i_1i_2 + i_1 + 1 \equiv 0 \pmod d$. \\
        \item If $i_1^{-1} \equiv -i_2^{-1} - 1 \pmod d$ then $i_1i_2 + i_1 + i_2 \equiv 0 \pmod d$.\\
        \item If $i_1^{-1} \equiv (i_2 + 1)^{-1} - 1 \pmod d$ then $i_1i_2 + i_2 + 1 \equiv 0 \pmod d$.\\
        \item If $i_1^{-1} \equiv -(i_2+1)^{-1} \pmod d$ then $i_1 + i_2 + 1 \equiv 0 \pmod d$, but this cannot happen since $1 \leq i_1,i_2 \leq \frac{d-3}{2}$.\\
    \end{itemize}

    The rest of the cases can be treated in a similar way.

\begin{comment}
    Assume first that $F' \simeq F_{2i_1}$, i.e., that it is isomorphic to the fixed field of $\sigma_0$ in $\cF_{i_1}$ and that $1 \leq 2i_1 \leq \frac{d-3}{2}$.
    
    Then, $F''$ is isomorphic to either $F_{2j}$ or $F_{d-2j-2}$, with $j$ as in one of the four cases or $j=i_2$. In the first case, i.e., with $F'' \simeq F_{2j}$ we have 
    $$
        2i_1 \equiv 2i_2 \pmod d \Rightarrow i_1 \equiv i_2 \pmod d,
    $$
    in fact we must have $i_1 = i_2$. In the second case we have
    $$
        2i_1 \equiv d - 2i_2 - 2 \pmod d \Rightarrow i_1 + i_2 + 1 \pmod d.
    $$

    For the fixed field of $\sigma_1$ and $\sigma_2$ there are more cases (see Section \ref{sec:subext}). We only show one of them here: 

    Suppose there exists $j \in \mZ$ such that $1\leq j \leq \frac{d-1}{2}$ and $i_2 j \equiv 1 \pmod d$. This corresponds to Case 1 in Section \ref{sec:subext}. Then, we get a subfield isomorphic to either $F_{2j}$ or $F_{d-2j-2}$. In the first case we have
    $$
        2i_1 \equiv 2j \pmod d \Rightarrow i_1i_2 \equiv ji_2 \equiv 1 \pmod d.
    $$
    In the second case we have
    $$
        2i_1 \equiv d-2j-2 \pmod d \Rightarrow i_1i_2 \equiv -1 - i_2 \pmod d.
    $$
    In either case, we end up with one of the congruence relations being satisfied.

    The remaining cases are treated in a similar matter.
\end{comment}
\end{proof}

%In the special case $i_1 = i_2$ we can be even more precise:

\begin{lemma}\label{lemma:non_iso_conditions}
    Assume $1\leq i \leq \frac{d-3}{2}$ and $\gcd(i(i+1),d)=1$. In $\cF_i$, the three subfields $F$, $F'$, and $F''$, corresponding to the involutions of $H_i$, are pairwise non-isomorphic unless either
    \begin{enumerate}[label=(\alph*)]
        \item $i = 1$, or 
        \item $i^2  + i + 1 \equiv 0 \pmod d$.
    \end{enumerate}
    In the first case, exactly two of the subfields are isomorphic and in the second case all three are isomorphic. Moreover, $F_{d-1}$ is isomorphic to one of the three fields if and only if (a) holds.
\end{lemma}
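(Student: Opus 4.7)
The plan is to combine the explicit descriptions of $F$, $F'$, $F''$ from Section \ref{sec:subext} with the isomorphism classification of the family $\{F_j\}$ from \cite{beelen_families_2024}, which (as recalled in the proof of Lemma \ref{lemma:iso_subfields_onlyif}) gives $F_{2k} \cong F_{2k'}$ exactly when $k \equiv k' \pmod d$ or $k \equiv -k'-1 \pmod d$, using that $d$ is odd. Denote the corresponding equivalence class by $[k] := \{k,\, -k-1\} \pmod d$. A direct reading of Cases 1--4 shows that the three subfields of $\cF_i$ correspond respectively to $[i]$ (from $\sigma_0$), $[i^{-1}]$ (from Cases 1 and 2), and $[(i+1)^{-1} - 1]$ (from Cases 3 and 4); the identities $[i^{-1}] = [-i^{-1}-1]$ and $[(i+1)^{-1}-1] = [-(i+1)^{-1}]$ render the dichotomies 1 vs.\ 2 and 3 vs.\ 4 immaterial for this analysis.

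The first step is to test the three pairs of classes for coincidence, each giving (after exploiting the symmetry of the class under $k \mapsto -k-1$) two congruences to check. For $([i], [i^{-1}])$ one obtains $i^2 \equiv 1 \pmod d$ (forcing $i = 1$ in the range $1 \le i \le (d-3)/2$) or $i^2 + i + 1 \equiv 0 \pmod d$. For $([i], [(i+1)^{-1} - 1])$ one obtains $(i+1)^2 \equiv 1 \pmod d$ (forcing $i \equiv 0$ or $i \equiv -2 \pmod d$, both excluded by the $\gcd$-hypothesis and the range) or again $i^2 + i + 1 \equiv 0 \pmod d$. For $([i^{-1}], [(i+1)^{-1} - 1])$ one obtains $i^2 + i + 1 \equiv 0 \pmod d$ or $2i + 1 \equiv 0 \pmod d$, the latter forcing $i = (d-1)/2$, out of range. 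Since $q > 5$ excludes $d = 3$, the case $i = 1$ never accidentally satisfies (b); so under (a) exactly the pair $([i], [i^{-1}])$ coincides (giving exactly two isomorphic subfields), under (b) all three classes coincide, and otherwise all three are distinct.

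For the claim about $F_{d-1}$, note that $F_{d-1} \cong F_{2k}$ iff $2k \equiv d - 1 \pmod d$, i.e.\ $k \equiv (d-1)/2 \pmod d$. Testing this against the three classes yields $i = (d-1)/2$ (out of range), $i \equiv -2 \pmod d$ (out of range), and $i \equiv 1 \pmod d$ respectively; so $F_{d-1}$ appears among the three subfields precisely when $i = 1$, in which case it arises from Case 4, where $(i+1)^{-1} = (d+1)/2$ gives $j = (d-1)/2$ and $F_{2j} = F_{d-1}$. The main obstacle throughout is pure bookkeeping: tracking the four cases for the two non-trivial involutions, handling both representatives of each class in every pairwise test, and carefully using the constraints $1 \le i \le (d-3)/2$ and $q > 5$ to discard every spurious solution.
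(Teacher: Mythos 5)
Your overall route is the same as the paper's: identify each of the three subfields with an index class for the family $\{F_j\}$ via the Section \ref{sec:subext} case analysis, then feed the isomorphism classification of \cite{beelen_families_2024} into a finite list of congruence tests, exactly as in (and with) Lemma \ref{lemma:iso_subfields_onlyif}; your packaging of the indices into classes $[k]=\{k,-k-1\}$ is a tidy way of organizing the same computation, and your treatment of case (a), case (b), and the $F_{d-1}$ criterion matches the paper's conclusions.

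There is one step that, as written, does not hold and needs the $\gcd$-hypothesis to repair -- and it is precisely the point where the composite-$d$ setting differs from the prime case. You assert that $i^2\equiv 1\pmod d$ forces $i=1$ ``in the range $1\le i\le (d-3)/2$'', and that $(i+1)^2\equiv 1\pmod d$ forces $i\equiv 0$ or $i\equiv -2\pmod d$. For composite $d$ the congruence $x^2\equiv 1\pmod d$ has $2^n$ solutions ($n$ the number of distinct prime factors), not just $\pm 1$; for instance $d=15$, $i=4$ satisfies $i^2\equiv 1\pmod{15}$ and lies in the range, so the range alone does not exclude it. What saves the argument is $\gcd(i(i+1),d)=1$: modulo each prime power $p^{\alpha}\mid d$ one has $p^{\alpha}\mid (i-1)(i+1)$, and since $p\nmid i+1$ this forces $i\equiv 1\pmod{p^{\alpha}}$ for every $p$, hence $i\equiv 1\pmod d$ and so $i=1$; the analogous localization (using $p\nmid i$) shows $(i+1)^2\equiv 1\pmod d$ forces $i\equiv -2\pmod d$, which is then out of range. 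You do invoke the $\gcd$-hypothesis for the second test, but the intermediate reduction to ``$i\equiv 0$ or $-2\pmod d$'' skips this localization, and for the first test you cite only the range. Once these two reductions are argued prime-power by prime-power as above, your proof is complete and agrees with the paper's.
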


\begin{proof}
    This follows from considerations very similar to those in the proof of the previous lemma. We show only a few details regarding the special cases: \newline
    \begin{itemize}
        \item If $i = 1$ then $\sigma_0$ fixes a field isomorphic to $F_2$, $\sigma_1$ fixes a field isomorphic to $F_{d-1}$ (this is Case 4 with $j_0 = (d+1)/2$), and $\sigma_2$ fixes a field isomorphic to $F_2$ (this is Case 1 with $j=1$). \newline

        %\item If $i = (d-1)/2$ then $\sigma_0$ fixes $F_{d-1}$, we get a field isomorphic to $F_2$ from Case 2 and an additional copy of $F_2$ from Case 4. \newline

        \item If $i^2 + i + 1 \equiv 0 \pmod d$ then there are two cases. If $1 \leq 2i \leq \frac{d-3}{2}$ then $\sigma_0$ fixes $F_{2i}$, we get a field isomorphic to $F_{2i}$ from Case 2 (with $j_0 = d - (i+1)$, and we get another field isomorphic to $F_{2i}$ from Case 4 (here $j_0 = d-i$). Similarly, if $\frac{d-1}{2} \leq 2i \leq d-3$ we get that the three fields are all isomorphic to $F_{d-2i-2}$. \newline
    \end{itemize}
    
    The fact that $F_{d-1}$ does not occur except in case $(a)$ can also be checked by going through the cases: We must have $j = \frac{d-1}{2}$, and this means that we are in Case $4$ with $i=1$.
    
\end{proof}

These two lemmas will be important for determining both the isomorphism classes in $\{\cF_i\}_i$, as well as the automorphism group of each $\cF_i$. We will consider the automorphism groups in Section \ref{sec:aut} and then return to the isomorphism classes in Section \ref{sec:iso}, but first we will need some results on the Weierstrass semigroups at the places of $\Omega$. \newline

\section{The semigroups at the places of $\Omega$}\label{sec:weierstrass}

Instead of considering the Weierstrass semigroups directly, we describe the gapnumbers at the places of $\Omega$. For $i=1$ we show that the gapsequences at $Q_\infty^1$ and $Q_\infty^2$, and hence the semigroups, are distinct from those at the the other places of $\Omega$. This will be useful for determining $\aut(F_1)$ later. First consider $\cF_i = \Fqq(x,y)$, for any $i$ satisfying $\gcd(i(i+1),d) = 1$.\newline

For $k,l \in \mZ$ define the differential $\omega_{k,l} := x^{k-1}y^{l-q-1}dx$. From Equation \ref{eq:divisors} we get

% &\left( (k-1) d + (l-q-1) i + d-1 \right) \left(Q_0^1 + Q_0^2\right) \\
%        &+ \left((l-q-1) + q \right) \left(Q_\alpha + Q_{-\alpha}\right) \\
%       &- \left((k-1)d + (l-q-1)(i+1) + d + 1 \right) \left(Q_\infty^1 + Q_\infty^2\right) \\

\begin{align*}
    (\omega_{k,l}) =  \ &\left( k d + (l-q-1) i - 1 \right) \left(Q_0^1 + Q_0^2\right) + \left(l-1 \right) \left(Q_\alpha + Q_{-\alpha}\right)\\
        &- \left(kd + (l-q-1)(i+1)  + 1 \right) \left(Q_\infty^1 + Q_\infty^2\right).
\end{align*}

This means that $\omega_{k,l}$ is regular if and only if

\begin{align*}
    l &>0, \\
    kd + li &> i(q+1), \ \text{ and }\\
    kd + (i+1)l &< (i+1)(q+1).
\end{align*}

In other words, $\omega_{k,l}$ is regular exactly if $(k,l)$ is an (integral) interior point of the triangle $\Delta$ with vertices $(0,q+1)$, $(2i,0)$ and $(2(i+1),0)$. Using Pick's theorem and $\gcd((i+1)i,d) = 1$, we find the number of interior integral points of this triangle to be $q-1$, i.e., equal to the genus of $\cF_i$ (as predicted also by well-known results on Newton polygons). \newline

By Proposition \ref{prop:reg_diff_gap}, the regular differentials described above give rise to gap numbers for the places of $\Omega$. The number of distinct differentials equals the number of gaps, i.e., $g(\cF_i) = q-1$, but in some cases two distinct differentials give rise to the same gap number. We will describe the gapsequences completely by considering linear combinations of the $\omega_{k,l}$'s. \newline

Denote by $G_\infty$, $G_0$ and $G_\alpha$ the gapsequences at $Q_\infty^1$, $Q_0^1$ and $Q_\alpha$ respectively. Note that they also equal the gapsequences at $Q_\infty^2$, $Q_0^2$ and $Q_{-\alpha}$, since these pairs of places form orbits under $H_i$. Moreover, denote by $\Delta_1$ the triangle with vertices $(i+1,d)$, $(2i+1,0)$ and $(2(i+1),0)$, and by $\Delta_2$ the triangle with vertices $(i,d)$, $(2i,0)$ and $(2i+1,0)$ (see Figure \ref{fig:1_delta}). We write $\Delta^\circ$ (respectively $\Delta_1^\circ$, $\Delta_2^\circ$) for the interior points of $\Delta$ (respectively $\Delta_1$, $\Delta_2$).  

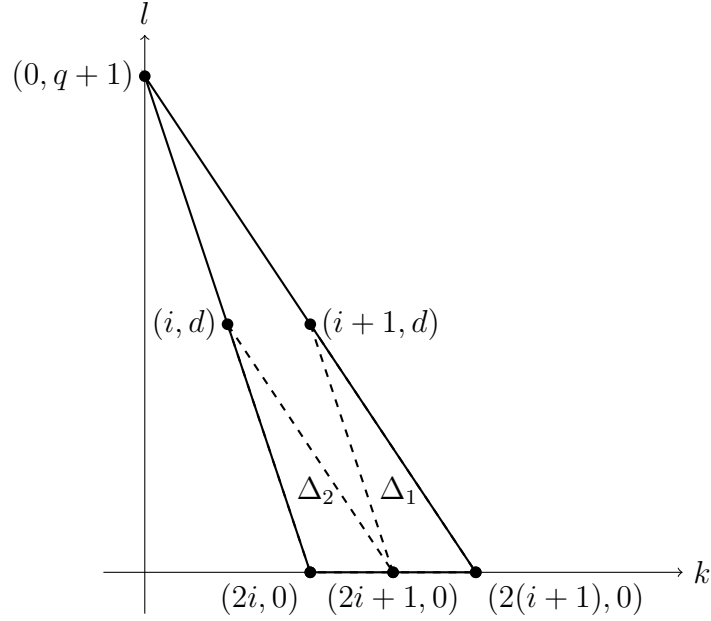
\begin{figure}[h!]

\begin{tikzpicture}[scale = 1.1]

    % Draw axes
    \draw[->] (-0.5,0) -- (6.5,0) node[right] {$k$}; % x-axis
    \draw[->] (0,-0.5) -- (0,6.5) node[above] {$l$}; % y-axis

    % Set i = 1, q = 5, and d = 3 for visualization
    \def\i{1}
    \def\q{5}
    \def\d{3}

    % Define coordinates for points in terms of i, q, and d
    \coordinate (A) at (0, {\q+1});
    \coordinate (B) at ({2*\i}, 0);
    \coordinate (C) at ({2*(\i+1)}, 0);

    \coordinate (D1) at ({\i+1}, {\d});
    \coordinate (E1) at ({2*\i+1}, 0);
    \coordinate (F1) at ({2*(\i+1)}, 0);

    \coordinate (D2) at ({\i}, {\d});
    \coordinate (E2) at ({2*\i}, 0);
    \coordinate (F2) at ({2*\i+1}, 0);

    % Draw triangle Δ
    \draw[thick] (A) -- (B) -- (C) -- cycle;
    \fill (A) circle (2pt);
    \fill (B) circle (2pt);
    \fill (C) circle (2pt);
    \node[left] at (A) {$(0, q+1)$};
    \node[below left] at (B) {$({2}i, 0)$};
    \node[below right] at (C) {$({2(i+1)}, 0)$};

    % Draw triangle Δ1
    \draw[thick, dashed] (D1) -- (E1) -- (F1) -- cycle;
    \fill (D1) circle (2pt);
    \fill (E1) circle (2pt);
    \fill (F1) circle (2pt);
    \node[right] at (D1) {$({i+1}, d)$};
    \node[below] at (E1) {$({2i+1}, 0)$};
    %\node[below] at (F1) {$({2(i+1)}, 0)$};
    
    % Add label for Δ1
    %\node at ($(D1)!0.55!(E1)$) [below right] {\hspace{0.05cm}$\Delta_1$};
    \node at ($(D1)!0.45!(E1)!0.40!(F1)$) {$\Delta_1$};

    % Draw triangle Δ2
    \draw[thick, dashed] (D2) -- (E2) -- (F2) -- cycle;
    \fill (D2) circle (2pt);
    \fill (E2) circle (2pt);
    \fill (F2) circle (2pt);
    \node[left] at (D2) {$({i}, d)$};
    %\node[below] at (E2) {$({2i}, 0)$};
    %\node[below] at (F2) {$({2i+1}, 0)$};
    
    % Add label for Δ2
    %\node at ($(D2)!0.55!(E2)$) [below right] {\hspace{0.05cm}$\Delta_2$};
    \node at ($(D2)!0.45!(E2)!0.40!(F2)$) {$\Delta_2$};

\end{tikzpicture}

\captionsetup{justification=centering}
\caption{The triangles $\Delta$, $\Delta_1$ and $\Delta_2$.}
\label{fig:1_delta}
\end{figure}

\begin{proposition}\label{prop:semigroups}
    With notation as above, we have
    
    \begin{align*}
        G_\infty = \ &\{-kd - (l-q-1)(i+1) \ \mid \ (k,l) \in \Delta^\circ, l < d \} \\ 
        &\cup \ \{-kd-(l-q-1)(i+1) + q+1 \ \mid \ (k,l) \in \Delta_1^\circ \}, \\
        \\
        G_0 = \ &\{kd + (l-q-1)i \ \mid \ (k,l) \in \Delta^\circ, l < d \} \\
        &\cup \ \{kd + (l-q-1)i + q+1 \ \mid \ (k,l) \in \Delta_2^\circ \}, \text{ and } \\
        \\
        G_\alpha = \ &\{ l \ \mid \ (k,l) \in \Delta^\circ \setminus \Delta_1^\circ \} \ \cup \ \{l + q+1 \mid (k,l) \in \Delta_1^\circ \}.
        \\
    \end{align*}
    
\end{proposition}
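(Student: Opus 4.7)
The plan is to apply Proposition~\ref{prop:reg_diff_gap} to the basis $\{\omega_{k,l}\}_{(k,l)\in\Delta^\circ}$ of the space of regular differentials, analysing the achievable valuations place by place. From Equation~\eqref{eq:divisors} one reads off
\[
v_{Q_\alpha}(\omega_{k,l}) = l-1,\quad v_{Q_0^j}(\omega_{k,l}) = kd+(l-q-1)i-1,\quad v_{Q_\infty^j}(\omega_{k,l}) = -(kd+(l-q-1)(i+1)+1),
\]
so the basis differentials group into ``clusters'' of common valuation: indexed by $l$ at $Q_\alpha$, by $kd+li$ at $Q_0$, and by $kd+l(i+1)$ at $Q_\infty$.

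The combinatorial content is to identify these clusters with the interior points of $\Delta_1$ and $\Delta_2$. At $Q_\alpha$ a horizontal slice of $\Delta^\circ$ has width $(q+1-l)/d<2$ while the corresponding slice of $\Delta_1^\circ$ has width $(d-l)/d<1$, so each $l$-cluster consists of at most two points, with at most one in $\Delta^\circ\setminus\Delta_1^\circ$ and at most one in $\Delta_1^\circ$. At $Q_\infty$, two basis elements share a valuation iff they differ by a multiple of $(i+1,-d)$; rewriting the interior inequalities of $\Delta^\circ$ at the shifted point shows that $(k,l)$ and $(k-(i+1),l+d)$ both lie in $\Delta^\circ$ precisely when $(k,l)\in\Delta_1^\circ$, which is just the translation identity $\Delta_1=(\Delta+(i+1,-d))\cap\{l\geq 0\}$. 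The analogous rewriting at $Q_0$ shows pairs there are parametrised by $\Delta_2^\circ$ via the shift $(i,-d)$, and in both cases every $(k,l)\in\Delta^\circ$ with $l>d$ turns out to be the upper member of a pair, so all $Q_\infty$- and $Q_0$-singletons satisfy $l<d$.

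The analytic content is a single boost computation using the identities
\[
\bigl(y^d/x^{i+1}\bigr)^2 = y^{q+1}/x^{2(i+1)} = 1 + x^{-2},\qquad \bigl(y^d/x^i\bigr)^2 = y^{q+1}/x^{2i} = x^2+1,
\]
both immediate from $y^{q+1}=x^{2i}(x^2+1)$. At $Q_\infty^j$ one has $v_{Q_\infty}(x^{-2})=2d=q+1$, so $y^d/x^{i+1}=\pm 1 + O(t^{q+1})$ in the completion, the two signs distinguishing $Q_\infty^1$ and $Q_\infty^2$. Since $\omega_{k-(i+1),l+d}=(x^{-(i+1)}y^d)\,\omega_{k,l}$, the unique linear combination cancelling the leading constant is a regular differential whose valuation at $Q_\infty^j$ is raised by exactly $q+1$, producing a second, ``boosted'' gap. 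The $Q_0$ case is identical via the second identity, and at $Q_\alpha$ the expansion $x-\alpha=O(y^{q+1})$ applied to $\omega_{k+1,l}/\omega_{k,l}=x$ yields the same $q+1$ boost.

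Adding up, each singleton contributes one gap and each pair contributes two (the base value and the base$+q+1$), so we obtain $|\Delta^\circ|=q-1=g$ distinct gap numbers, which must be the full gap sequence. The parametrisation $(\Delta^\circ\setminus\Delta_1^\circ)\sqcup\Delta_1^\circ$ at $Q_\alpha$, $(\Delta^\circ\cap\{l<d\})\sqcup\Delta_1^\circ$ at $Q_\infty$, and the $\Delta_2$-analogue at $Q_0$ then translate directly into the claimed formulas. The main obstacle will be the inequality bookkeeping needed to show that $\Delta_1,\Delta_2$ parametrise the paired clusters exactly; once the two translation identities are established, the analytic boost is a one-line expansion that works because $p\neq 2$.
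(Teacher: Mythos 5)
Your proposal follows essentially the same route as the paper: identify the regular differentials $\omega_{k,l}$ with the interior lattice points of $\Delta$, apply Proposition \ref{prop:reg_diff_gap}, observe that basis differentials sharing a valuation at $Q_\infty$ (resp.\ $Q_0$) are exactly the pairs $\omega_{k,l},\omega_{k-(i+1),l+d}$ (resp.\ $\omega_{k,l},\omega_{k-i,l+d}$) with $(k,l)\in\Delta_1^\circ$ (resp.\ $\Delta_2^\circ$), build from each pair a combination whose valuation is boosted by exactly $q+1$, and close the argument by comparing with the genus. Your analytic step (expanding $y^d/x^{i+1}=\pm1+O(t^{q+1})$ in the completion and cancelling the leading constant, using $p\neq 2$ for exactness) is a repackaging of the paper's algebraic rewriting $\omega_{k,l}-\omega_{k-(i+1),l+d}=\frac{x^{i-1}}{y^d+x^{i+1}}\,\omega_{k,l}$; both give the boost of exactly $q+1$, and your translation identities for $\Delta_1,\Delta_2$ are the same ``direct checks'' the paper leaves implicit.

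There is, however, one missing step in your concluding count. You assert that the produced numbers are ``$q-1=g$ \emph{distinct} gap numbers,'' but for $G_\infty$ and $G_0$ you never rule out that a boosted gap coming from one cluster coincides with the base gap of a \emph{different} cluster; base gaps are distinct (distinct valuations) and boosted gaps are distinct, but cross-coincidences are not excluded by your clustering alone, and if one occurred your list would contain fewer than $g$ values and the final ``must be the full gap sequence'' would not follow. This is precisely the point where the paper proves $G_1\cap G_2=\emptyset$: equality $-kd-(l-q-1)(i+1)=-k'd-(l'-q-1)(i+1)+q+1$ forces $l\equiv l'\pmod d$, hence $l=l'$ (both are $<d$ once you reduce to the lower members) and then $k'-k=2$, which is impossible because every horizontal slice of $\Delta$ has width $(q+1-l)/d<2$ --- a fact you already invoke in your $Q_\alpha$ paragraph but do not apply here. (For $G_\alpha$ the disjointness is trivial since boosted values exceed $q+1$.) Adding this short congruence-plus-width argument makes your proof complete and essentially identical to the paper's.
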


\begin{proof}
    We will show details only for the description of $G_\infty$. The results regarding $G_0$ and $G_\alpha$ are obtained in a similar way. \\

    Let $G_1$ be the first set in the union above and $G_2$ the second set. The claim is then that $G_\infty = G_1 \cup G_2$. It follows from Proposition \ref{prop:reg_diff_gap} and the discussion above that the elements of $G_1$ are gap numbers. To see that distinct pairs $(k,l), (k',l') \in \Delta^\circ$, with $l,l'<d$, give rise to distinct gap numbers assume that
    $$
        -kd - (l-q-1)(i+1) = -k'd - (l'-q-1)(i+1).
    $$
    
    Then $kd + l(i+1) = k'd + l'(i+1)$, and working modulo $d$ yields $l = l'$, since $\gcd(i+1,d)=1$ and $l,l' < d$. This implies also $k = k'$, so in fact $(k,l) = (k',l')$. This shows that
    
    $$
        |G_1| = |\{(k,l) \in \Delta^\circ \ \mid \ l<d\}| = q-1 - \frac{q-1}{4},
    $$
    and all these elements are gap numbers at $Q_\infty^1$. \newline

    Now consider instead $G_2$. For $(k,l) \in \Delta_1^\circ$ a direct check shows that $(k-(i+1), l+d)\in \Delta^\circ$. This means that both $\omega_{k,l}$ and $\omega_{k-(i+1), l+d}$ are regular differentials, and so is $\omega := \omega_{k,l}-\omega_{k -(i+1), l + d}$. We determine $v_{Q_\infty^1}(\omega)$ by rewriting
    \begin{align*}
        \omega &= \left(x^{k-1}y^{l-q-1} - x^{k-(i+1)-1}y^{l+d-1}\right) dx \\
            &= \left(1-x^{-(i+1)}y^d\right) x^{k-1}y^{l-1} dx \\
            &= x^{-(i+1)}\left(y^d - x^{i+1}\right) \omega_{k,l} \\
            &= \frac{x^{i-1}}{y^d + x^{i+1}} \omega_{k,l},
    \end{align*}
    
    where the last equality follows from the defining equation of $\cF_i$. This means that 
    \begin{align*}
                v_{Q_\infty^1}(\omega) &= v_{Q_\infty^1}(\omega_{k,l}) + v_{Q_\infty^1}\left(\frac{x^{i-1}}{y^d + x^{i+1}}\right) \\
                &= v_{Q_\infty^1}(\omega_{k,l}) + d(i-1) - d(i+1) \\
                &= -kd-(l-q-1)(i+1)-1 + q+1,
    \end{align*}
    so Proposition \ref{prop:reg_diff_gap} shows that the elements of $G_2$ are in fact gap numbers. A similar argument as for $G_1$ shows that distinct integral points in $\Omega_1^\circ$ give rise to distinct gap numbers, so we have 
    $$
        |G_2| = |\{(k,l) \in \Delta_1^\circ \}| = \frac{q-1}{4}.
    $$

    The total number of gaps is known to be $g(\cF_i) = |G_1| + |G_2|$, so we are done if we can show $G_1 \cap G_2 = \emptyset$. To see that this is true, assume that
    $$
         -kd - (l-q-1)(i+1) = -k'd - (l'-q-1)(i+1) + q+1,
    $$
    for some $(k,l) \in \Delta^\circ$, with $l<d$, and $(k',l') \in \Delta_1^\circ$. Then working modulo $d$ yields $l = l'$ and it follows that $d(k'-k) = q+1$, i.e., $k'-k = 2$. The width of $\Delta^\circ$ is strictly smaller than 2, so this is a contradiction. We conclude that $G_\infty = G_1 \cup G_2$ as desired. \newline

    The results on $G_0$ and $G_\alpha$ are obtained analogously, using differentials of the form $\omega_{k,l} - \omega_{k-i,l+d}$ and $\omega_{k,l}-\alpha \omega_{k-1,l}$ respectively (where as usual $\alpha$ is an element of $\Fqq$ satisfying $\alpha^2 = -1$). 
    
\end{proof}

Even with this rather explicit description it seems difficult to distinguish the gapsequences, or semigroups, at the places of $\Omega$ in general. However, in the special case $i=1$ we are able to do so:

\begin{corollary}\label{cor:semigrous_i=1}
    For $i=1$ and $q > 5$, the gapsequence $G_\infty$ is different from both $G_0$ and $G_\alpha$.
\end{corollary}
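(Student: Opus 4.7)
The plan is to exhibit an integer that lies in $G_\infty$ but in neither $G_0$ nor $G_\alpha$; inspecting the three sets from Proposition \ref{prop:semigroups} for $i=1$, the natural candidate is $q-1 = 2d-2$.

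Specializing to $i=1$ makes the three triangles very concrete. $\Delta$ has vertices $(0,q+1),(2,0),(4,0)$, and its interior lattice points with $l<d$ are $(2,l)$ for $l\in\{1,\ldots,d-1\}$ together with $(3,l)$ for $l\in\{1,\ldots,(d-1)/2\}$; meanwhile $\Delta_1^\circ$ consists of $(3,l)$ and $\Delta_2^\circ$ of $(2,l)$ for $l\in\{1,\ldots,(d-1)/2\}$. The first step is then to verify that $q-1\in G_\infty$ by plugging $(k,l)=(2,1)$ into the first part of the description:
$$-kd-(l-q-1)(i+1) = -2d - 2(1-2d) = 2d-2 = q-1.$$

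Next I would rule out $q-1\in G_0$. An element of the first part of $G_0$ has the form $kd+l-2d$, so the equation $kd+l-2d=2d-2$ becomes $kd+l=4d-2$; the ranges above give $l=2d-2\ge d$ for $k=2$ and $l=d-2\ge d/2$ for $k=3$, neither compatible with $\Delta^\circ$ once $d>3$. An element of the second part equals $2d+l$ with $l\ge 1$, hence is strictly larger than $q-1$. A parallel check handles $G_\alpha$: its first part consists of the $l$-coordinates of points in $\Delta^\circ\setminus\Delta_1^\circ$, and the maximum value of $l$ on $\Delta^\circ$ is $(3d-1)/2$, which is strictly smaller than $2d-2$ precisely when $d>3$; its second part consists of integers $l+q+1\ge q+2 > q-1$.

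The main obstacle is the bookkeeping for $G_0$: its description splits into two overlapping bands, and one must verify that $q-1$ lands in the hole between them. This is exactly where the hypothesis $d>3$ (equivalently $q>5$) enters the argument, which is consistent with the fact that $q=5$ is genuinely exceptional and has already been handled separately in Section \ref{sec:special}.
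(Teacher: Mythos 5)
Your proof is correct and follows essentially the same strategy as the paper: use the explicit description of Proposition \ref{prop:semigroups} to exhibit a single distinguishing gap number, with the hypothesis $q>5$ (i.e.\ $d>3$) entering in exactly the same way. The only difference is the choice of witness — you show $q-1=2d-2$ lies in $G_\infty$ but in neither $G_0$ nor $G_\alpha$, whereas the paper shows $d+2$ lies in both $G_0$ and $G_\alpha$ but not in $G_\infty$; both verifications are valid.
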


\begin{proof}
    We show that $d+2$ is in $G_0$ and $G_\alpha$ but not in $G_\infty$. To see that $d+2 \in G_0$ we check that $(3,2) \in \Delta^0$. Indeed, we have $2 > 0$, $3\cdot d + 2 > q+1$ and $3d + 4 < 2(q+1)$ since $q>5$. Note that also $2 < d$, so it follows from Proposition \ref{prop:semigroups} that $G_0$ contains $3d + (2-q-1) = d + 2$. Similarly, it can be checked that $(1,d+2) \in \Delta^\circ \setminus \Delta_1^\circ$ and this implies $d+2 \in G_\alpha$. \newline

    On the other hand, if $d+2 \in G_\infty$ then, since $d+2 < q+1$, there exists $(k,l)\in \Delta^\circ$ with $l<d$, such that 
    $$
        -kd -2(l-q-1) = d + 2.
    $$

    Working modulo $d$ implies $l = d-1$ and inserting this back into the equation yields $k=1$ as the only option. This is a contradiction since $kd + l = 2d-1 = q$, which shows that $(k,l)=(d-1,1)$ is not an interior point of $\Delta$. The desired result follows.
\end{proof}

In particular, the $\aut(\cF_1)$-orbit containing $Q_\infty^1$ and $Q_\infty^2$ does not contain any other places from $\Omega$. We will use this observation to determine $\aut(\cF_1)$ in the end of the following section. \newline

\section{The automorphism group of $\cF_i$}\label{sec:aut}
We determine the the structure of the automorphism group of $\cF_i$. For convenience, we still assume $1 \leq i \leq \frac{d-3}{2}$, as well as $\gcd(i(i+1),d)=1$ and $q > 5$. As mentioned in the introduction, we already know a subgroup $H_i\subseteq \aut(\cF_i)$,  which is isomorphic to $\mZ_2 \times \mZ_{q+1}$. This means that $H_i$ has a unique Sylow $2$-group, $S$, which is isomorphic to $\mZ_2 \times \mZ_2$. For $i \neq 1$, we will show that $S$ is also the unique Sylow $2$-group of $G$, and use this fact to determine the full automorphism group of $\cF_i$. To complete also the case $i =1 $, we will need the results on the Weierstrass semigroups at the places of $\Omega$. In most cases, we will conclude that there are no more automorphisms than those in $H_i$. \newline

\subsection{The case $i \neq 1$}

In the rest of this section we assume $i \in \{ 2, \dots, (d-3)/2\}$ with $\gcd(i(i+1),d) = 1$. Note that this also implies $q>5$. First, we show that any involution of $\aut(\cF_i)$ is conjugate to one of the three involutions of $H_i$. This will be useful both for determining the full automorphism group of $\cF_i$ and for describing the isomorphism classes, since it implies that any degree two subfield of $\cF_i$ is isomorphic to one of the three described in Section \ref{sec:subext}. 

%The key ingredient in the proof is \cite[Lemma 6.2]{giulietti_algebraic_many_aut_2019}.

\begin{theorem}\label{thm:2sylow_is_klein}
    For $i = 2, \dots, (d-3)/2$ with $\gcd(i(i+1),d) = 1$, any involution of $\aut(\cF_i)$ is conjugate to one of the three involutions of $H_i$. 
\end{theorem}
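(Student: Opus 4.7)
The strategy is to prove that $S$ is the unique Sylow $2$-subgroup of $G := \aut(\cF_i)$, from which the theorem is immediate: every involution of $G$ then lies in $S$ and so equals (and is trivially conjugate to) one of $\sigma_0, \sigma_1, \sigma_2$. The looser phrasing in terms of conjugacy accommodates an intermediate step in which only the fact that every Sylow $2$-subgroup of $G$ is conjugate to $S$ is established.

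I would first show that $S$ is normal in $G$. Given $\phi \in G$, the conjugate $\phi S \phi^{-1}$ is again a Klein four subgroup whose three involutions fix the subfields $\phi(F), \phi(F'), \phi(F'')$; since $\phi$ is a field automorphism, these are index-two subfields of $\cF_i$ abstractly isomorphic to $F, F', F''$ respectively. The aim is to force $\{\phi(F), \phi(F'), \phi(F'')\} = \{F, F', F''\}$, which then gives $\phi S \phi^{-1} = S$. In the generic case $i^2 + i + 1 \not\equiv 0 \pmod d$, Lemma \ref{lemma:non_iso_conditions} provides pairwise non-isomorphism of $F, F', F''$, and I would combine this with the classification of degree-two subfields of $\cF_i$ from Section \ref{sec:subext} and Lemma \ref{lemma:iso_subfields_onlyif} to rule out any other subfield of $\cF_i$ being isomorphic to one of $F, F', F''$. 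In the exceptional case $i^2 + i + 1 \equiv 0 \pmod d$, all three subfields are isomorphic, but the order-$3$ automorphism $\omega$ of Section \ref{sec:special} realises every cyclic permutation of $\{F, F', F''\}$; composing $\phi$ with a suitable power of $\omega$ then reduces to the generic situation.

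Having established that $S$ is normal in $G$, I would next verify that $S$ is actually a Sylow $2$-subgroup. For any involution $\tau \in G$, normality gives $\tau \in N_G(S)$, and the resulting action of $\tau$ on $\{\sigma_0, \sigma_1, \sigma_2\}$ by conjugation has order dividing $2$. A non-trivial action would be a transposition, which via the previous paragraph would identify two of $F, F', F''$ as abstractly isomorphic function fields---this contradicts Lemma \ref{lemma:non_iso_conditions} in the generic case (and is again handled through $\omega$ in the exceptional case). Hence $\tau$ centralises $S$. Assuming $\tau \notin S$, then $V := \langle \tau, S \rangle \cong \mZ_2^3$, and I would apply Riemann--Hurwitz to the cover $\cF_i \to \cF_i^V$, using the explicit ramification of each $\sigma_j$ at the places of $\Omega$ (readable from \eqref{eq:divisors}) together with the tameness of all involutions, to derive a contradiction---either via a forbidden value for $g(\cF_i^V)$ or via a violation of the Hasse--Weil bound for the $\Fqq$-maximal subfield $\cF_i^V$.

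The main obstacle is this final Riemann--Hurwitz argument: we have no a priori data on the ramification of the hypothetical new involution $\tau$, so everything must be extracted from the tower $\cF_i \supseteq \cF_i^S \supseteq \cF_i^V$ and the known action of $S$. I expect that assembling enough ramification to close the argument---and, in parallel, adapting both steps to the exceptional case $i^2 + i + 1 \equiv 0 \pmod d$---will be the most delicate aspect of the proof.
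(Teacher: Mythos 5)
Your plan aims at a strictly stronger statement than the theorem requires, and both of its main steps have genuine gaps. First, the normality step is circular: to force $\{\phi(F),\phi(F'),\phi(F'')\}=\{F,F',F''\}$ you need to control \emph{all} degree-two subfields of $\cF_i$, i.e.\ all involutions of $G$ --- but Section \ref{sec:subext} and Lemma \ref{lemma:iso_subfields_onlyif} only describe the subfields fixed by the involutions of $H_i$ (and compare them across different indices). The assertion that every degree-two subfield of $\cF_i$ is isomorphic to one of $F,F',F''$ is exactly Corollary \ref{cor:iso_subext}, which the paper deduces \emph{from} the theorem you are trying to prove. Moreover, even granting pairwise non-isomorphism, an abstract isomorphism $\phi(F)\cong F$ does not yield $\phi(F)=F$ unless you already know $F$ is the unique subfield in its isomorphism class, which again presupposes knowledge of all involutions. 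Indeed, normality of $S$ (Theorem \ref{thm:syl2_is_normal}) is substantially harder and is proved in the paper only later, using Suzuki's classification and Kantor--O'Nan--Seitz, \emph{after} and \emph{using} the present theorem. Second, your concluding step --- excluding $V=\langle\tau,S\rangle\cong\mZ_2\times\mZ_2\times\mZ_2$ --- is left as an acknowledged open obstacle; the Riemann--Hurwitz computation you sketch cannot be closed without ramification data for the hypothetical $\tau$, which you do not have.

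The missing idea is that no case analysis or normality is needed: since $g(\cF_i)=q-1$ is even, the results of Giulietti--Korchm\'aros on automorphism groups of curves of even genus show that every $2$-subgroup of $\aut(\cF_i)$ contains a cyclic subgroup of index two; this alone rules out $\mZ_2\times\mZ_2\times\mZ_2$ with no ramification argument. The paper's proof then runs: let $S_2$ be a Sylow $2$-subgroup containing $S$; its center contains an involution, which must lie in $S$ (else one gets $\mZ_2^3$); calling it $\varphi$, the quotient $S_2/\langle\varphi\rangle$ embeds into $\aut(F)$ where $F$ is the fixed field of $\varphi$, and by Section \ref{sec:subext} together with \cite[Theorem 4.8]{beelen_families_2024} (using $i\neq 1$ so $F\not\cong F_{d-1}$) the group $\aut(F)$ has order $q+1$ or $3(q+1)$, whose $2$-part is $2$ since $q+1\equiv 2\pmod 4$. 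Hence $|S_2|=4$, so $S_2=S$ is itself a Sylow $2$-subgroup, and Sylow conjugacy immediately gives that every involution of $G$ is conjugate into $S$. Your proposal, as it stands, neither establishes the conjugacy statement by this route nor supplies a complete substitute.
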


\begin{proof}
    Assume $i \neq 1$. Denote by $S$ the Sylow $2$-group of $H_i$ and by $S_2$ be the Sylow $2$-group of $\aut(\cF_i)$ that contains $S$. Recall that $S$ is isomorphic to $\mZ_2 \times \mZ_2$.  Since $g(\cF_i) = q-1$ is even we can apply \cite[Lemma 6.2]{giulietti_algebraic_many_aut_2019} to obtain a cyclic subgroup of $S_2$ of index 2. \newline

    %\textbf{Claim 1:} There exists $\sigma \in S_H$ such that $S_2 = C \rtimes \langle \sigma \rangle$. \newline
    
    %In fact, $C$ cannot contain all of $S_H$ since it is cyclic, so there is some $\sigma \in S_H$ with $\sigma \not\in C$. Now, $C$ has index $2$ so it is normal in $S_2$ and $C\langle \sigma \rangle$ must be all of $S_2$. \newline

    \textbf{Claim 1:} There exists $\varphi \in S$ such that $\varphi$ is central in $S_2$. \newline

    In fact, since $S_2$ is a $2$-group its center is non-trivial and hence contains an element of order $2$, say $\alpha$. Now, if $\alpha \not\in S$ then $\langle \alpha, S\rangle$ is isomorphic to $\mZ_2\times \mZ_2\times \mZ_2$, but this is in contradiction with \cite[Lemma 6.1]{giulietti_algebraic_many_aut_2019} since this $2$-group does not contain a cyclic group of index two. \newline

    \textbf{Claim 2:} $S_2/\langle \varphi \rangle$ has order two. \newline

    Let $F$ denote the fixed field of $\langle \varphi \rangle$. It is a consequence of Galois theory (see \cite[Theorem 11.36]{hirschfeld_algebraic_2008}) that $S_2/\langle \varphi \rangle$ is isomorphic to a subgroup of $\aut(F)$. Now, the automorphism group of $F$ is well understood: From the discussion in Section \ref{sec:subext} we know that $F$ is isomorphic to $F_j$ for some $j \in \mZ$ with $1 \leq j \leq \frac{d-3}{2}$ or $j=d-1$, and $\gcd(j(j+2),d) = 1$. In fact, by Lemma \ref{lemma:non_iso_conditions}, our assumption on $i$ ensures $j\neq d-1$. It follows then, from \cite[Theorem 4.8]{beelen_families_2024} that $\aut(F_j)$ is either cyclic of order $q+1$ or the semidirect product of a cyclic group of order $q+1$ and another cyclic group of order $3$. In any case, since $q \equiv 1 \pmod 4$, this implies the claim. \newline

    It follows from the above that $S_2$ is a group of order four containing (an isomorphic copy of) $\mZ_2\times \mZ_2$, that is $S_2 = S \simeq \mZ_2 \times \mZ_2$. Any other involution $\psi \in \aut(\cF_i)$ is contained in a Sylow 2-group and hence conjugate to an element of $S_2$. This finishes the proof. 
\end{proof}

%Note that the above proof also shows that $S$ is a Sylow 2-subgroup of $\aut(\cF_i)$, when $i \notin \left\{1, \frac{d-1}{2}\right\}$. 

As an easy consequence we obtain the following:

\begin{corollary}\label{cor:iso_subext}
        For $i = 2, \dots, (d-3)/2$ with $\gcd(i(i+1),d) = 1$, any degree two subfield of $\cF_i$ is isomorphic to one of the three fixed fields of the involutions of $H_i$.
\end{corollary}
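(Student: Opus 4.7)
The plan is to reduce the statement to Theorem \ref{thm:2sylow_is_klein} via Galois theory. The key observation is that any degree two subfield of $\cF_i$ is automatically the fixed field of an involution in $\aut(\cF_i)$, so the conjugacy statement in the theorem translates directly into the isomorphism statement we want.

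In more detail: let $F$ be an arbitrary degree two subfield of $\cF_i$. Since the characteristic is odd, the extension $\cF_i / F$ is separable; moreover, if $y\in \cF_i \setminus F$ has minimal polynomial $T^2 + aT + b$ over $F$, then the other root $-a - y$ lies in $F(y) = \cF_i$, so the extension is also normal. Hence $\cF_i/F$ is Galois of degree two, and there exists an involution $\sigma\in\aut(\cF_i)$ whose fixed field is $F$.

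By Theorem \ref{thm:2sylow_is_klein}, $\sigma$ is conjugate in $\aut(\cF_i)$ to one of the three involutions $\sigma_0,\sigma_1,\sigma_2$ of $H_i$; say $\sigma = \tau^{-1}\sigma_k\tau$ for some $\tau\in\aut(\cF_i)$ and some $k\in\{0,1,2\}$. For any $z\in\cF_i$ we have $\sigma(z)=z$ if and only if $\sigma_k(\tau(z)) = \tau(z)$, so $\tau$ maps $F$ bijectively onto the fixed field $F_k$ of $\sigma_k$. Restricting $\tau$ therefore gives an $\Fqq$-isomorphism $F \xrightarrow{\sim} F_k$, which is exactly the desired conclusion.

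There is no real obstacle here, as the argument is essentially formal once Theorem \ref{thm:2sylow_is_klein} is in hand; the only thing to check is that a degree two subfield really does arise as the fixed field of an involution, which follows from the standard fact that any degree two separable extension is Galois.
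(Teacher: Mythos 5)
Your argument is correct and is essentially the paper's own route: the paper derives the corollary directly from Theorem \ref{thm:2sylow_is_klein}, implicitly using exactly the observation you spell out, namely that a degree two subfield in odd characteristic is the fixed field of an involution and that conjugating an involution carries its fixed field isomorphically onto the fixed field of the conjugate. Your write-up just makes these standard Galois-theoretic steps explicit.
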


%For $i \in \left\{1, \frac{d-1}{2}\right\}$ the situation is slightly more complicated. We will return to this case in Section \ref{sec:special}.

%Assume now that the the three degree two subfields described in Section \ref{sec:subext} are pairwise non-isomorphic.  We will show that $G := \aut(\cF_i)$ is equal to $H := H_i$ in this case. From Lemma \ref{lemma:non_iso_conditions} we know that this covers everything except $i \in \left\{1, \frac{d-1}{2}\right\}$ and any $i$ satisfying $i^2 + i + 1 \equiv 0 \pmod d$. \newline
%Recall that the $2$-group $S \subseteq H$ is a Sylow $2$-group of $G$, and that $S \simeq \mZ_2 \times \mZ_2$ (see the proof of Theorem \ref{thm:2sylow_is_klein}). 

We will now distinguish between two different cases. The first case is that in which the three degree two subfields described in Section \ref{sec:subext} are pairwise non-isomorphic. Then, for each Sylow 2-group there are exactly three, pairwise non-isomorphic, degree two subfields arising as fixed fields of the involutions of that group. We will often make use of this, as well as the fact that these three subfields are isomorphic to $F$, $F'$, and $F''$ respectively. In the second case, in which $i^2 + i + 1 \equiv 0 \pmod d$, all three degree two subfields are isomorphic, and we have an extra automorphism $\gamma$ of order three as defined in Section \ref{sec:special}. By Lemma \ref{lemma:non_iso_conditions} this covers everything except $i=1$, which we will deal with separately.

For $i^2 + i + 1 \equiv 0 \pmod d$, we will need the fact that $\omega$ normalizes $H_i$, i.e., that $\langle \omega, H_i\rangle = H_i \rtimes \langle \omega \rangle$. To see this, denote by $F$ a subfield of $\cF_i$ corresponding to an involution of $H_i$. We know from \cite[Theorem 4.8]{beelen_families_2024} that $|\aut(F)| = q+1$, since the characteristic three case does not occur when $i^2 + i + 1 \equiv 0 \pmod d$ (see the comment after Lemma \ref{lemma:number_i^2+i+1_pi(d)}). The degrees match, so the fixed field of $\aut(F)$ is equal to the fixed field of $H_i$ in $\cF_i$. For $h \in H_i$ we have
$$
    \omega^{-1} h \omega \vert_F \in \aut(F).
$$
so $\omega^{-1}h\omega$ fixes the fixed field of $\aut(F)$, which is equal to the fixed field of $H_i$. This means that $\omega^{-1}h\omega \in H_i$, and we conclude that $\langle \omega, H_i \rangle = \langle\omega\rangle \rtimes H_i$ as desired. In particular, $\langle \omega, H_i \rangle$ is a subgroup of $G$ of order $3(q+1)$, and it contains no more involutions than those coming from $H_i$.

Now, we give some further results regarding the involutions and Sylow 2-subgroups of $G$. We know that the involutions of $S$, and hence all the involutions of $G$, fix exactly two places. It turns out that knowing these places is enough to know the involution: 

\begin{lemma}\label{lemma:inv_by_fixed_places}
    For $i = 2, \dots, (d-3)/2$ with $\gcd(i(i+1),d) = 1$, any involution of $G$ is completely determined by the two places it fixes. 
\end{lemma}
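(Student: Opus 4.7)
My plan is to combine Theorem~\ref{thm:2sylow_is_klein} with a case analysis on whether two hypothetical involutions sharing the same fixed pair commute.

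I would first use the divisors in \eqref{eq:divisors} and evaluate each involution of $H_i$ on suitable local parameters at the places of $\Omega$ (for instance $y^d/x^i$ at $Q_0^j$ and $y^d/x^{i+1}$ at $Q_\infty^j$). This shows that $\sigma_0\colon(x,y)\mapsto(x,-y)$ fixes $\{Q_\alpha,Q_{-\alpha}\}$ pointwise and swaps each of the other two pairs in $\Omega$, while $\sigma_1\colon(x,y)\mapsto(-x,y)$ and $\sigma_2=\sigma_0\sigma_1$ fix the pairs $\{Q_0^1,Q_0^2\}$ and $\{Q_\infty^1,Q_\infty^2\}$ pointwise, with the precise assignment depending on the parity of $i$. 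Hence the three involutions of the Sylow $2$-subgroup $S_0=\langle \sigma_0,\sigma_1\rangle$ of $H_i$ fix three pairwise distinct pairs of places. By Theorem~\ref{thm:2sylow_is_klein} every Sylow $2$-subgroup of $G$ is a conjugate of $S_0$, and since conjugation carries fixed pairs to fixed pairs, in every Sylow $2$-subgroup the three involutions fix three pairwise distinct pairs.

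I would then suppose for contradiction that $\psi_1\neq\psi_2$ are involutions of $G$ with common fixed pair $\{P,Q\}$. If $\psi_1$ and $\psi_2$ commute, then $\langle \psi_1,\psi_2\rangle\cong \mZ_2\times\mZ_2$ is a Sylow $2$-subgroup of $G$ (again by Theorem~\ref{thm:2sylow_is_klein}), and its third involution $\psi_1\psi_2$ fixes every place fixed by both, hence fixes $\{P,Q\}$. This contradicts the previous paragraph.

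If instead $\psi_1$ and $\psi_2$ do not commute, I set $\tau:=\psi_1\psi_2\neq \mathrm{id}$. Since the characteristic is odd, each $\psi_k$ acts as $-1$ on the tangent spaces $T_P$ and $T_Q$, so $\tau$ acts trivially on both tangent spaces and therefore lies in the wild inertia at $P$ and at $Q$; in particular $\tau$ has order a power of $p$, and some suitable power $\tau'$ of $\tau$ has order exactly $p$ while still fixing both $P$ and $Q$ pointwise. Applying the Deuring--Shafarevich formula to the cyclic $p$-extension $\cF_i/\cF_i^{\langle \tau'\rangle}$ and using that $\cF_i$ has $p$-rank zero forces exactly one totally ramified place in this cover, contradicting the fact that both $P$ and $Q$ must be ramified in it. I expect the main obstacle to be precisely this non-commuting case; its resolution hinges on the observation that $\tau=\psi_1\psi_2$ lies in the wild inertia at each of the two shared fixed places, which combined with the $p$-rank zero property of $\cF_i$ rules out such a configuration.
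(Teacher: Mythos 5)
Your proof is correct, and its decisive step — showing that the product $\psi_1\psi_2$ of two involutions with the same two fixed places acts trivially on the (co)tangent spaces there, hence lies in the wild inertia (the Sylow $p$-part of the stabilizer), and then using $p$-rank zero (via Deuring--Shafarevich) to exclude a nontrivial $p$-element fixing two places — is essentially the paper's own argument, which reaches the same conclusion by citing the structure $G_P = S_p \rtimes C$ with $C$ cyclic and the fact that an order-$p$ automorphism of a zero $p$-rank function field fixes exactly one place. Your commuting/non-commuting case split (and hence the whole first paragraph with its appeal to Theorem~\ref{thm:2sylow_is_klein} and the explicit fixed-place computation, where incidentally the functions $y^d/x^i$ and $y^d/x^{i+1}$ are units at the relevant places rather than local parameters) is unnecessary: if $\psi_1\psi_2$ were a nontrivial element of the wild inertia it would have odd order, so the same argument disposes of the commuting case as well.
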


\begin{proof}
    Suppose that $\sigma_1,\sigma_2\in G$ are involutions fixing the same places $P$ and $P'$. We claim that $\sigma_1 = \sigma_2$. To show this, first note that both $\sigma_1$ and $\sigma_2$ are in the stabilizer, $G_P$, of $P$. From \cite[Theorem 11.49]{hirschfeld_algebraic_2008} we know that $G_P = S_p \rtimes  C$ where $S_p$ is a $p$-Sylow subgroup of $G_P$ and $C$ is a cyclic subgroup of $G_P$. The characteristic, $p$, is odd by assumption, so $S_p$ has no involutions. Moreover, a cyclic subgroup has at most one involution, so the image of $\sigma_1$ and $\sigma_2$ in $G_P/S_p \simeq C$ must be equal. This means that
    $$
        \sigma_1 \circ \sigma_2 = \sigma_1 \circ \sigma_2^{-1} \in S_p,
    $$
    i.e., $\varphi := \sigma_1 \circ \sigma_2 \in S_p\subseteq G$ is either the identity or has order $p$. 
    
    Recall that the $p$-rank of $\cF_i$ is zero, since $\cF_i$ is $\Fqq$-maximal, so any element of order $p$ has exactly one fixed place (see \cite[Lemma 11.129]{hirschfeld_algebraic_2008}). We know that $\varphi$ fixes both $P$ and $P'$, so it cannot be an element of order $p$. Then, $\varphi$ must be the identity, and we conclude that $\sigma_1 = \sigma_2$, as wished. 
    
\end{proof}

Another important observation is the following:

\begin{lemma}\label{lemma:2syl_trivial_intersection}
    For $i = 2, \dots, (d-3)/2$ with $\gcd(i(i+1),d) = 1$, the intersection of two distinct Sylow $2$-subgroups of $G$ is trivial.
\end{lemma}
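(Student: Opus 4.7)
The plan is to argue by contradiction and reduce to a statement about the automorphism group of a degree-two subfield. Suppose $S_1$ and $S_2$ are distinct Sylow $2$-subgroups of $G$ with $S_1 \cap S_2 \neq \{e\}$. By Theorem \ref{thm:2sylow_is_klein}, both are isomorphic to $\mZ_2 \times \mZ_2$, so $|S_1 \cap S_2|$ divides $4$, and since $S_1 \neq S_2$ it must equal $2$. Write $S_1 \cap S_2 = \langle \sigma \rangle$, $S_1 = \{e, \sigma, \tau_1, \tau_2\}$, and $S_2 = \{e, \sigma, \rho_1, \rho_2\}$, so in particular $\rho_1, \rho_2 \notin S_1$.

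The key step is to descend to the fixed field $F := \cF_i^{\sigma}$. Because $S_1$ and $S_2$ are abelian, each of $\tau_1, \tau_2, \rho_1, \rho_2$ centralizes $\sigma$ and therefore restricts to an automorphism of $F$. The restriction map $C_G(\sigma) \to \aut(F)$ has kernel $\langle \sigma \rangle$, so the images $\bar\tau_1, \bar\tau_2, \bar\rho_1, \bar\rho_2$ are nontrivial elements of order $2$ in $\aut(F)$.

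Next I invoke the known description of $\aut(F)$. By Corollary \ref{cor:iso_subext}, $F$ is isomorphic to one of the three degree-two subfields from Section \ref{sec:subext}, each of the form $F_j$; Lemma \ref{lemma:non_iso_conditions} together with the hypothesis $i \neq 1$ rules out the case $j = d-1$, leaving $1 \leq j \leq (d-3)/2$ with $\gcd(j(j+2), d) = 1$. Then \cite[Theorem 4.8]{beelen_families_2024} gives that $\aut(F_j)$ is either cyclic of order $q+1$ or a semidirect product $\mZ_{q+1} \rtimes \mZ_3$. Since $q+1 \equiv 2 \pmod 4$, both structures contain a unique involution: in the semidirect case any involution has trivial $\mZ_3$-component (as $|\mZ_3|$ is odd) and hence lies in $\mZ_{q+1}$, which has exactly one element of order $2$.

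Consequently $\bar\tau_1 = \bar\rho_1$, hence $\tau_1 \rho_1 \in \langle \sigma \rangle$ and $\rho_1 \in \{\tau_1, \sigma \tau_1\} = \{\tau_1, \tau_2\} \subseteq S_1$, contradicting $\rho_1 \notin S_1$. The main obstacle is the classification step: one must verify uniqueness of the involution in $\aut(F_j)$ and, before that, confirm that the $j = d-1$ branch is genuinely excluded. Both are short verifications, but they are precisely where the hypothesis $i \neq 1$ enters the argument.
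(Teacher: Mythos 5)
Your proof is correct, but it takes a genuinely different route from the paper's. The paper argues by conjugation: writing $S' = \gamma^{-1} S \gamma$ and choosing a nontrivial $\sigma \in S \cap S'$, it notes that $\gamma^{-1}(F)$ (where $F$ is the fixed field of the corresponding involution of $S$) must again be one of the three distinguished degree-two subfields, and then splits into cases according to whether $F$, $F'$, $F''$ are pairwise non-isomorphic or all isomorphic; the latter case needs the extra automorphism $\omega$ and the fact that $\langle \omega, H_i\rangle = H_i \rtimes \langle \omega \rangle$, and in either case one concludes $\gamma \in H_i$ (resp. $\langle \omega, H_i\rangle$), hence $S' = S$. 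You instead work inside the centralizer of the common involution $\sigma$: restriction to $F = \cF_i^{\sigma}$ is a homomorphism with kernel exactly $\langle \sigma \rangle$, and since the $2$-part of $\aut(F_j)$ is cyclic of order two (because $q+1 \equiv 2 \pmod 4$, and any involution of $\mZ_{q+1} \rtimes \mZ_3$ lies in the normal cyclic factor), the two Klein four-groups through $\sigma$ are forced to coincide. This buys you a uniform argument with no case split on $i^2+i+1 \equiv 0 \pmod d$ and no appeal to $\omega$ or its normalization of $H_i$; in effect you redeploy the mechanism of Claim 2 in the proof of Theorem \ref{thm:2sylow_is_klein}, while the inputs are the same as the paper's (Corollary \ref{cor:iso_subext}, Lemma \ref{lemma:non_iso_conditions} to exclude $F_{d-1}$ when $i \neq 1$, and the structure of $\aut(F_j)$ from \cite{beelen_families_2024}). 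One small citation point: the fact that every Sylow $2$-subgroup of $G$ is isomorphic to $\mZ_2 \times \mZ_2$ is established inside the proof of Theorem \ref{thm:2sylow_is_klein} (where it is shown that the Sylow $2$-subgroup containing $S$ equals $S$), not in its statement, so you should point to that step explicitly rather than to the statement alone.
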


\begin{proof}
    Suppose there exists two different Sylow $2$-subgroups with non-trivial intersection. By conjugating with a suitable automorphism we get that $S \subseteq H_i$ has non-trivial intersection with some other Sylow $2$-subgroup $S'$. Pick $\gamma \in G$ such that
    $$
        S' = \gamma^{-1} S \gamma,
    $$

    and consider some $\sigma \in S \cap S'$ different from the identity. Then, find $\sigma_1 \in S$ such that
    $$
        \sigma = \gamma^{-1} \sigma_1 \gamma,
    $$
    and note that the fixed field of $\sigma_1$ must be a degree two subfield of $\cF_i$. Denote this subfield by $F$, and let $F'$ and $F''$ be the two other degree two subfields fixed by elements of $S$. The fixed field of $\sigma$ must also be among these three, since $\sigma \in S$. 
    
    Now, consider the degree two subfield $\gamma^{-1}(F)$. It is easy to check that $\sigma = \gamma^{-1} \sigma_1 \gamma$ fixes all elements of $\gamma^{-1}(F)$. Moreover, the degrees fit so this must be the fixed field of $\sigma$, and hence equal to either $F$, $F'$ or $F''$.

    If the three degree two subfields are pairwise non-isomorphic, the only option is 
    $$
        \gamma^{-1}(F) = F.
    $$

    This means that $\gamma$ restricts to an automorphism on $F$, so $\gamma \in H_i$ and hence 
    
    $$
        S' = \gamma^{-1} S_1 \gamma \subseteq H_i.
    $$
    
    We conclude that $S = S'$, which is a contradiction. \newline 

\begin{comment}
    This means that $\gamma$ restricts to an automorphism on $F$, and in particular that it fixes every element of the fixed field of $\aut(F)$. Denote this fixed field by $F_0$, and recall that it is also the fixed field of $\aut(F')$ and $\aut(F'')$ (\color{red} maybe a reference to a result here? Careful with $p=3$...\color{black}). Recall further that the degree two subfields lying between $F_0$ and $\cF_i$, i.e., those corresponding to involutions of $\aut(\cF_i/F_0)$, are exactly $F$, $F'$ and $F''$.
    
    We claim that $S_2 = \gamma^{-1} S_1 \gamma \subseteq \aut(\cF_i/F_0)$. This is true since $\gamma$, $\gamma^{-1}$ and all elements of $S_1$ restrict to either elements of $\aut(F)$, $\aut(F')$ or $\aut(F'')$. It follows that the fixed fields of the involutions of $S_2$ are $F$, $F'$ and $F''$. These were, by definition, the fixed fields of the involutions of $S_1$, so we must have $S_2 = S_1$, which is a contradiction.
\end{comment}

    If instead all three degree two subfields are isomorphic, we have $i^2 + i + 1 \equiv 0 \pmod d$, and there is an automorphism $\omega \in G$, as described previously, which acts as a $3$-cycle on $F$, $F'$ and $F''$. This means that 
    $$
        \omega^{k} \gamma^{-1} \vert_F \in \aut(F)
    $$
    for some $k \in \{0,1,2\}$, and hence $\omega^k \gamma^{-1} \in H_i$, so $\gamma \in \langle \omega, H_i \rangle = H_i \rtimes \langle \omega \rangle$, which implies $S = S'$. We conclude that distinc Sylow 2-subgroups of $G$ have trivial intersection.

\end{proof}

Finite groups of even order satisfying that different Sylow 2-groups intersect trivially were characterized by M. Suzuki in \cite{suzuki_finite_1964}. Using this, as well as the characterization of certain 2-transitive groups by Kantor, O'Nan and Seitz in \cite{kantor_2-transitive_1972}, we are now able to show a key result regarding the structure of $G$:

\begin{theorem}\label{thm:syl2_is_normal}
    For $i = 2, \dots, (d-3)/2$ with $\gcd(i(i+1),d) = 1$, $S$ is the unique Sylow $2$-subgroup in $G$.
\end{theorem}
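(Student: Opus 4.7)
The plan is to argue by contradiction: suppose $G = \aut(\cF_i)$ has more than one Sylow $2$-subgroup. By Theorem \ref{thm:2sylow_is_klein}, every such subgroup is isomorphic to $\mZ_2 \times \mZ_2$, and by Lemma \ref{lemma:2syl_trivial_intersection} any two of them intersect trivially. Hence $G$ is a finite group of even order whose Sylow $2$-subgroups pairwise meet trivially, which is exactly the hypothesis in Suzuki's classification theorem from \cite{suzuki_finite_1964}.

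First I would invoke Suzuki's theorem to reduce to a short list of possibilities for $G/O(G)$, where $O(G)$ denotes the largest normal subgroup of odd order. One alternative in the classification is that $G$ has a normal Sylow $2$-subgroup, which is exactly what we are trying to prove, so it suffices to rule out the remaining alternatives. In each of those, $G/O(G)$ (or a closely related quotient) acts faithfully and $2$-transitively on a set naturally in bijection with the collection of Sylow $2$-subgroups. I would then apply the Kantor-O'Nan-Seitz classification of finite $2$-transitive permutation groups from \cite{kantor_2-transitive_1972} to obtain an explicit list of candidates, typically almost-simple groups of type $\mathrm{PSL}(2,r)$, $\mathrm{PGL}(2,r)$, or $\mathrm{PSU}(3,r)$ for various prime powers $r$.

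The final step is to rule out each remaining candidate using the data we already possess for $\cF_i$. Specifically: $G$ contains the subgroup $H_i \simeq \mZ_2 \times \mZ_{q+1}$, so $|G|$ is divisible by $2(q+1)$; the $p$-rank of $\cF_i$ is zero, so by \cite[Lemma 11.129]{hirschfeld_algebraic_2008} any nontrivial $p$-element fixes exactly one place; the genus equals $q-1$, which gives upper bounds on $|G|$ via the Hurwitz-type bounds for automorphism groups of curves; and Lemma \ref{lemma:inv_by_fixed_places} together with Corollary \ref{cor:iso_subext} tightly restricts the set of involutions of $G$ and the degree-two subfields they can cut out. The main obstacle I expect is exactly this casework: for each almost-simple candidate, one must match its order, its Sylow $2$-structure, the number of its conjugacy classes of involutions, and its action on cosets, against the arithmetic of $q$, $d$ and $i$ and against the geometric constraints coming from the $\Fqq$-maximal curve $\cF_i$. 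When the gross order and Sylow data are compatible, finer invariants will be needed to produce a contradiction, and I anticipate that the three pairwise non-isomorphic degree-two subfields from Section \ref{sec:subext}, permuted by $G$, will provide the key obstruction in those borderline cases.
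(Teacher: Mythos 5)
Your outline has the right ingredients (Suzuki's classification for groups with independent Sylow $2$-subgroups, then Kantor--O'Nan--Seitz), but it misplaces the decisive mechanism and leaves the hard case without a working obstruction. The paper splits into two cases according to Lemma \ref{lemma:non_iso_conditions}. When the three degree-two subfields are pairwise non-isomorphic, the conclusion is immediate: the three involutions of $S$ lie in distinct conjugacy classes (their fixed fields are non-isomorphic), while Suzuki's Lemma 6 forces all involutions to be conjugate as soon as there is more than one Sylow $2$-subgroup with trivial pairwise intersections; no $2$-transitive classification is needed at all. In the remaining case $i^2+i+1\equiv 0 \pmod d$, the three subfields are \emph{all isomorphic}, so the ``three pairwise non-isomorphic degree-two subfields permuted by $G$'' that you expect to provide the key obstruction in borderline cases simply do not exist exactly where the heavy machinery is required. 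Your plan therefore has no mechanism to close the argument in the one case where Suzuki's Lemma 6 does not already finish it.

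There is also a gap in how you invoke Kantor--O'Nan--Seitz: their theorem classifies $2$-transitive groups in which the stabilizer of \emph{two points} is cyclic, and for the action of $G$ on the set of Sylow $2$-subgroups there is no reason this hypothesis holds. The paper instead builds, in the hard case, a concrete $2$-transitive action on the six places of $\Omega$: Suzuki's Theorem 2 gives an $A_5$ section, which forces exactly five Sylow $2$-subgroups and hence fifteen involutions, all conjugate; Lemma \ref{lemma:inv_by_fixed_places} then bounds the $G$-orbit of a place of $\Omega$ by $30$, so (after excluding $q=5,9$) the orbit is $\Omega$ itself, and the bijection between the fifteen involutions and the $\binom{6}{2}$ pairs of places yields $2$-transitivity; cyclicity of two-point stabilizers follows from the $p$-rank-zero argument via \cite[Theorem 11.49]{hirschfeld_algebraic_2008} and \cite[Lemma 11.129]{hirschfeld_algebraic_2008}. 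Only then does KOS apply, and the candidates $\mathrm{PSL}(2,q_0)$, $\mathrm{PGL}(2,q_0)$, $\mathrm{PSU}(3,q_0)$, $\mathrm{PGU}(3,q_0)$, $\mathrm{Sz}(q_0)$, $\mathrm{Ree}(q_0)$ are eliminated by concrete data: $|G|$ divisible by $4$ but not $8$, $|G|\geq 6(q+1)$, and the involution count ($q_0(q_0-1)/2\neq 15$ for $\mathrm{PSL}(2,q_0)$, $q_0$ odd). Without constructing this action on $\Omega$ and carrying out these counts, the casework you anticipate cannot be completed.
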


\begin{proof}
        %For $q = 3$ we use Magma to check that the claim is true, so assume from now on that $q \geq 5$.

        If the three degree two subfields are pairwise non-isomorphic then the involutions in $S$ must belong to distinct conjugacy classes. By Lemma \ref{lemma:2syl_trivial_intersection} above we can apply \cite[Lemma 6]{suzuki_finite_1964}, which then implies that $S$ is the unique Sylow $2$-subgroup. \newline
        
        Otherwise, all three degree two subfields are isomorphic, so assume from now on that $i^2 + i + 1 \equiv 0 \pmod d$, and that there is more than one Sylow $2$-subgroup of $G$. \newline
        
        From \cite[Lemma 6]{suzuki_finite_1964} we conclude that all involutions of $G$ are conjugate. By applying Suzuki's classification \cite[Theorem 2]{suzuki_finite_1964} and using $S \simeq \mZ_2\times \mZ_2$ we get that $G$ contains a normal subgroup $G_1$ and $G_2$ such that 
            $$
                \{\text{id}\} \subseteq G_2 \subsetneq G_1 \subseteq G,
            $$
        where both $|G/G_1|$ and $|G_2|$ are odd and $G_1/G_2$ is isomorphic to $A_5$ (the alternating group on five elements). From this we deduce some further results regarding the structure of $G$, which will eventually lead to the contradiction we are searching for. \newline
        
        %Note that we have used the fact that $2^3$ does not divide $|G|$ to eliminate the other possibilities for $G_1/G_2$ that Suzuki lists. \newline

        %The result of the theorem follows if we can show that the second case never happens. So, assume that we have $G_1$ and $G_2$ as described in \emph{Case 2}. 

        %On the way towards obtaining a contradiction we will show that the number of Sylow $2$-subgroups, $n_2$, is $5$ and that $\Omega$, as defined in the introduction, is a short $G$-orbit of order $6$. \newline

        \textbf{Claim 1:} The number of Sylow $2$-subgroups of $G$ is five. \newline
        
        Let $n_2$ be the number of Sylow $2$-subgroups. From the discussion following Theorem 2 in \cite{suzuki_finite_1964} we see that $G_1/G_2 \simeq A_5$ acts 2-transitively on the set of Sylow $2$-groups of $G$. This immediately implies that $n_2 \leq 6$, since the order of $A_5$ has to be divisible by $n_2(n_2-1)$. On the other hand $A_5$ has five different Sylow 2-subgroups, so we obtain 
        $$
            5 \leq n_2 \leq 6
        $$
        by using that $|G/G_1|$ is odd. By Sylow's theorem $n_2$ is odd, so we conclude that $n_2 = 5$. \newline

        \textbf{Claim 2:} The set $\Omega$ is a $G$-orbit. \newline

        Fix some place $P \in \Omega$. We consider the connection between the number of Sylow 2-subgroups and the size of the $G$-orbit of $P$. Let $\sigma \in H$ be some involution fixing $P$ and another place $P'\in \Omega$, and denote by $O_P$ the $G$-orbit of $P$. For any $\gamma \in \aut(\cF_i)$, we have an involution fixing the places $\gamma(P)$ and $\gamma(P')$, namely 
        $$
            \sigma_\gamma := \gamma \circ \sigma \circ \gamma^{-1}.
        $$
        
        If, for $\gamma_1,\gamma_2 \in G$, we have
        $$
            \{ \gamma_1(P), \gamma_1(P')\} \neq \{\gamma_2(P), \gamma_2(P')\},
        $$ 
        then Lemma \ref{lemma:inv_by_fixed_places} implies that $\sigma_{\gamma_1}$ and $\sigma_{\gamma_2}$ are different involutions. The number of involutions of $G$ is $3\cdot n_2 = 15$, so this means that 
        $$
            15 \geq |O_P|/2.
        $$

        Recall that $H_i$ acts with long orbits outside of $\Omega$, so 
        $$
            |O_P| = 6 + 2k (q+1) \leq 30,
        $$
        which is true only if $k=0$ or $q \leq 11$. Now, the only options for $q \leq 11$ are $q = 5$ and $q=9$. In the first case we must have $i = 1$, so this option is not valid, and in the second case the equation $i^2 + i + 1 \equiv 0 \pmod d$ has no solutions, so this case does not occur. We conclude that $k = 0$, so in fact $O_P = \Omega$. \newline

        \textbf{Claim 3:} $G$ acts 2-transitively on $\Omega$. \newline
        
        %Finally, we claim that $G$ then acts 2-transitively on $\Omega$ and the classification by Kantor, O'Nan and Seitz in \cite{kantor_2-transitive_1972} will yield a contradiction. 

        The number of involutions is $15 = \binom{6}{2}$, they are all in the same conjugacy class and any involution fixes exactly two places in $\Omega$. This means there is a 1-to-1 correspondence between pairs of places of $\Omega$ and involutions of $G$. Now fix some $P \in \Omega$ and choose $P' \in \Omega$ such that $\{P,P'\}$ forms an $H_i$-orbit. Let $\pi \in H_i$ be some automorphism switching $P$ and $P'$, and let $\sigma$ be the involution that fixes $P$ and $P'$. For a place $Q \in \Omega \setminus \{P,P'\}$ denote by $\sigma'$ the involution fixing $P$ and $Q$, and determine $\gamma \in G$ such that
        $$
            \sigma' = \gamma \sigma \gamma^{-1}.
        $$

        Then $\gamma$ maps $\{P, P'\}$ to $\{ P, Q\}$, so either $\gamma$ fixes $P$ and maps $P'$ to $Q$ or $\gamma \circ \pi$ fixes $P$ and maps $P'$ to $Q$. This shows that the stabilizer of $P$ acts transitively on $\Omega \setminus \{P\}$, so we conclude that $G$ acts 2-transitively on $G$. \newline

        Finally, we will use the classification by Kantor, O'Nan and Seitz in \cite{kantor_2-transitive_1972} to obtain a contradiction. Note that the stabilizer of two different places in $\Omega$ is cyclic by \cite[Theorem 11.49]{hirschfeld_algebraic_2008} and \cite[Lemma 11.129]{hirschfeld_algebraic_2008}, since the $p$-rank of $\cF_i$ is zero. This means we can apply the classification result \cite[Theorem 1.1]{kantor_2-transitive_1972}. Since the order of $\Omega$ is not a prime power, $G$ cannot have a regular normal subgroup (see e.g. \cite[Theorem 1.7.5]{biggs_permutation_1979}), so $G$ must be one of the groups
        $$
            \mathrm{PSL}(2,q_0), \ \mathrm{PGL}(2,q_0), \ \mathrm{PSU}(3,q_0), \ \mathrm{PGU}(3,q_0), \ \mathrm{Sz}(q_0), \text{ or } \mathrm{Ree}(q_0),
        $$
        where $q_0$ is a prime power. We know $|G|$ is divisible by four but not eight, and this is enough to exclude $\mathrm{PSU}(3,q_0)$,  $\mathrm{PGU}(3,q_0)$ and $\mathrm{Ree}(q_0)$. Also, the only option for $\mathrm{Sz}(q_0)$ is $q_0 = 2$, but in this case three does not divide the order. The group $\mathrm{PGL}(2,q_0)$ has order divisible by eight except for $q_0 = 2$ and $q_0 = 4$, but $G \simeq \mathrm{PGL}(2,2)$ or $G \simeq \mathrm{PGL}(2,4)$ would imply 
        $$
            6(q+1) \leq |G| \leq 60,
        $$
        which only happens for $q \leq 9$, and we already saw that $q = 5$ and $q = 9$ does not occur.

        A similar argument shows that $G \simeq \mathrm{PSL}(2,q_0)$ cannot happen for $q_0$ even. If $q_0$ is odd, then the number of involutions of $\mathrm{PSL}(2,q_0)$ is known to be $q_0(q_0-1)/2$ (see, e.g., \cite[Section 13, Theorem 1.4 and the beginning of Subsection 13.3]{gorenstein1980finite}), and this is not equal to $15$ for any valid choice of $q_0$.

\begin{comment}
        If $q_0$ is odd the size of the conjugacy classes of $\mathrm{PSL}(2,q_0)$ are known to be 
        $$
            \frac{q_0(q_0-1)}{2}, \ \frac{q_0^2 - 1}{2}, \ \frac{q_0(q_0+1)}{2}, \ q_0(q_0-1), \text{ and }  q_0(q_0+1). 
        $$

        There are $15$ involutions of $G$ forming a single conjugacy class, but a direct check shows that no value of $q_0$ makes $15$ appear in the list above. 
\end{comment}

        There are no more remaining options, so we have arrived at a contradiction. We conclude that $S$ is the unique Sylow $2$-subgroup of $G$ as desired.

\end{proof}

The description of the full automorphism group now follows easily: 

\begin{corollary}
    For $i = 2, \dots, (d-3)/2$ with $\gcd(i(i+1),d) = 1$ we have
    $$
        \aut(\cF_i) = 
        \begin{cases}
            H_i \rtimes \langle \omega \rangle &\text{ if } \ i^2 + i + 1 \equiv 0 \pmod d, \text{ and } \\
            \hfil H_i &\text{ otherwise.}
        \end{cases}
    $$
\end{corollary}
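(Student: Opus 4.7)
The plan is to write $G := \aut(\cF_i)$ and, since the containments $H_i \subseteq G$ (and $H_i \rtimes \langle \omega \rangle \subseteq G$ when $i^2 + i + 1 \equiv 0 \pmod d$) are already established, reduce everything to proving the reverse inclusions. The key input is Theorem \ref{thm:syl2_is_normal}: the Sylow $2$-subgroup $S$ is normal in $G$, so $G$ acts on $S$ by conjugation and hence permutes the three involutions of $S$, equivalently the three degree-two subfields $F, F', F''$ of $\cF_i$. Fix $\sigma_0 \in S$ with fixed field $F$ and set $N := \mathrm{Stab}_G(F)$. Since $\cF_i / F$ is a degree-two Galois extension in odd characteristic, the restriction map $N \to \aut(F)$ has kernel exactly $\langle \sigma_0 \rangle$, which yields the embedding
\[
    N/\langle \sigma_0 \rangle \hookrightarrow \aut(F).
\]
Because $H_i$ is abelian it centralizes $S$, so $H_i \subseteq N$, and $H_i/\langle\sigma_0\rangle$ is a cyclic subgroup of $\aut(F)$ of order $q+1$.

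In Case 1, where $i^2 + i + 1 \not\equiv 0 \pmod d$, Lemma \ref{lemma:non_iso_conditions} ensures $F, F', F''$ are pairwise non-isomorphic, so every $\gamma \in G$ stabilizes each of them and $N = G$. Using \cite[Theorem 4.8]{beelen_families_2024} together with the description of $F$ as a member of the $F_j$ family from Section \ref{sec:subext}, I expect $|\aut(F)| = q+1$; combined with the embedding this forces $|G| = 2(q+1) = |H_i|$ and hence $G = H_i$. In Case 2, where $i^2 + i + 1 \equiv 0 \pmod d$, Lemma \ref{lemma:non_iso_conditions} gives $F \simeq F' \simeq F''$, and $\omega$ acts as a $3$-cycle on them, so the $G$-orbit of $F$ has size exactly $3$ and $[G : N] = 3$. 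The remark after Lemma \ref{lemma:number_i^2+i+1_pi(d)} excludes the characteristic-three exception, so again $|\aut(F)| = q+1$, whence $N = H_i$ and $|G| = 3|H_i| = |H_i \rtimes \langle \omega \rangle|$, as required.

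The main obstacle is verifying $|\aut(F)| = q+1$ in Case 1, i.e., ruling out $\aut(F) \simeq \mZ_{q+1} \rtimes \mZ_3$. If that occurred, the image of $G/\langle\sigma_0\rangle$ in $\aut(F)$ would necessarily be all of $\aut(F)$, so $G$ would contain an element $\tau$ of order $3$ centralizing $S$; restricting $\tau$ to $F'$ and $F''$ would then force $|\aut(F')|$ and $|\aut(F'')|$ to be $3(q+1)$ as well. Since $F, F', F''$ are in this case three pairwise non-isomorphic members of the family $\{F_j\}$, a direct check against the list of admissible indices produced in Section \ref{sec:subext} and the classification in \cite[Theorem 4.8]{beelen_families_2024} should preclude this, completing the proof.
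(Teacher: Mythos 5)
Your overall route is the paper's: you invoke Theorem \ref{thm:syl2_is_normal} to conclude that every automorphism permutes the three fixed fields $F, F', F''$ of the involutions of $S$, split according to whether these are pairwise non-isomorphic or all isomorphic (Lemma \ref{lemma:non_iso_conditions}), and then bound $|\aut(\cF_i)|$ by restricting to a degree-two subfield and using $|\aut(F_j)| = q+1$ from \cite[Theorem 4.8]{beelen_families_2024}. Your packaging via the embedding $N/\langle \sigma_0\rangle \hookrightarrow \aut(F)$ and orbit--stabilizer counting is just a reformulation of the paper's fixed-field argument, and your Case 2 is complete (the paper likewise excludes characteristic three there via the remark after Lemma \ref{lemma:number_i^2+i+1_pi(d)}).

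The genuine gap is in Case 1, and you flag it yourself: you never establish $|\aut(F)| = q+1$, and your fallback reductio is both unfinished and misstated. It is not true that if $\aut(F) \simeq \mZ_{q+1} \rtimes \mZ_3$ then the image of $G/\langle\sigma_0\rangle$ is ``necessarily all of $\aut(F)$''; that follows only after assuming $G \neq H_i$, which is precisely what you are trying to rule out (under that assumption the count does force it, since the image is a subgroup of $\aut(F)$ of order a multiple of $q+1$ properly containing the image of $H_i$). Moreover, the concluding step --- that all three of $F, F', F''$ would then have automorphism group of order $3(q+1)$, contradicting the classification --- is left as ``should preclude this''. The paper's resolution is much shorter and is the piece you are missing: by \cite[Theorem 4.8]{beelen_families_2024}, in the admissible range of indices (and $j \neq d-1$ is guaranteed by Lemma \ref{lemma:non_iso_conditions} because $i \neq 1$, a point you should also record before invoking that theorem) the only exception to $|\aut(F_j)| = q+1$ is $F_j \simeq F_1$ with $q$ a power of three; since in Case 1 the three subfields are pairwise non-isomorphic, at most one of them can be isomorphic to $F_1$, so one simply replaces $F$ by $F'$ or $F''$ and runs your argument verbatim with a subfield whose automorphism group has order $q+1$. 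With that substitution your Case 1 closes; without it, or without completing your reductio (which in any case needs the same input, namely that the exceptional automorphism group can occur for at most one of the three isomorphism classes), the proof is incomplete.
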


\begin{proof}
    For $\sigma \in G$, it follows from Theorem \ref{thm:syl2_is_normal} that $\sigma(F)\in \{F, F', F''\}$. We consider the two different cases. \newline
    
    Assume first that $i^2 + i + 1 \not\equiv 0 \pmod d$. Then $F$, $F'$ and $F''$ are pairwise non-isomorphic, so the only option is $\sigma(F) = F$. This means that $\sigma\vert_F \in \aut(F)$. From \cite[Theorem 4.8]{beelen_families_2024} we know $|\aut(F)| = q+1$ unless $F \simeq F_1$ and $q$ is a power of three. In this case, replace $F$ by $F'$, and note that $F' \not\simeq F_1$. Since the degrees match, the fixed field of $\aut(F)$, or $\aut(F')$, must be equal to the fixed field of $H_i$ in $\cF_i$. In particular, $\sigma$ fixes the fixed field of $H_i$, and hence  $\sigma \in H_i$. Since $\sigma$ was arbitrary this shows $G = H_i$. \newline

    If instead $i^2 + i + 1 \equiv 0 \pmod d$ then $F$, $F'$ and $F''$ are all isomorphic, and $\aut(\cF_i)$ contains an automorphism, $\omega$, which acts as a $3$-cycle on $\{F,F',F''\}$. In particular, 
    $$
        \omega^k \sigma \vert_F \in \aut(F),
    $$
    for some $k \in \{0,1,2\}$. From \cite[Theorem 4.8]{beelen_families_2024} we know $|\aut(F)| = q+1$, so again the fixed field of $\aut(F)$ is equal to the fixed field of $H_i$. This implies that $\omega^k \sigma \in H_i$, so $\sigma \in \langle \omega, H_i \rangle = H_i \rtimes \langle \omega \rangle$, and this finishes the proof.
    
    %We conclude that $\sigma \in \widetilde{H}_i$, and since $\sigma$ was arbitrary this shows $G = \widetilde{H}_i$.

    %To finish the proof, we show that $\langle \omega, H_i \rangle = H_i \rtimes \langle \omega \rangle$. Indeed, for $h \in H_i$ we have
    %$$
    %    \omega^{-1} h \omega \mid_F \in \aut(F), 
    %$$
    %and with the same argument as above this implies $\omega^{-1}h\omega \in H_i$. We conclude that $\langle \omega, H_i \rangle = \langle\omega\rangle \rtimes H_i$ as desired. It is a subgroup of $G$ of order $3(q+1)$ and we note that it contains no more involutions than those coming from $H_i$.
\end{proof}

\subsection{The case $i=1$}\label{sec:special_i=1} The previously used methods appear to be inadequate in this case. One reason is that the automorphism group now contains more involutions. Another, is that one of the subfields arising from the involutions of $H_1$ is $F_{d-1}$, which is isomorphic to the Roquette curve and hence has a large automorphism group. Instead, we will rely on information regarding the Weierstrass semigroups at the places of $\Omega$, and use a method similar to what was done in \cite{beelen_families_2024}. \newline

%The claim on H_1 H_{(d-2)/2} can for example be checked by showing \Omega_1 is mapped to \Omega_{i}

%There is an isomorphism between $\cF_1$ and $\cF_{(d-2)/2}$ similar to the one given in \cite[Lemma 7.1 (II)]{giulietti_m=2_curves_2006}, and it can be checked directly that $H_1$ corresponds to $H_{(d-2)/2}$ under the induced map on the automorphism groups, so it is sufficient to consider $i=1$. 

We claim that $\aut(\cF_1)$ is generated by $\pi$ and $H_1$, where $\pi$ is the involution defined in Section \ref{sec:special}. In fact, we have the following theorem:

\begin{theorem}
    For $q > 5$ and $i=1$, the automorphism group of $\cF_i$ is the semidirect product of $H_i$ and a group of order two. In particular, we have $|\aut(\cF_i)| = 4(q+1)$.
\end{theorem}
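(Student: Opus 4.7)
The plan is to match a lower bound on $|\aut(\cF_1)|$ coming from the explicit subgroup $\langle H_1, \pi\rangle$ against an upper bound obtained via the Weierstrass semigroup data of Corollary~\ref{cor:semigrous_i=1}.

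First I would verify the lower bound by showing $\tilde H_1 := \langle H_1, \pi\rangle = H_1 \rtimes \langle\pi\rangle$ has order exactly $4(q+1)$. Writing elements of $H_1$ as $\rho_{a,b}\colon (x,y)\mapsto(ax,by)$ with $a^2 = b^{q+1} = 1$, a direct computation using the defining formula for $\pi$ and the fact that $a^{q+1}=-1$ (for the fixed element $a$ in the definition of $\pi$) gives $\pi\rho_{a,b}\pi = \rho_{b^d/a,\,b} \in H_1$. So $\pi$ normalizes $H_1$, and since $\pi$ is an involution not in $H_1$, we obtain $|\tilde H_1|=4(q+1)$.

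Next I would set up the stabilizer at $P_\infty^1$ by means of the function $\lambda := y^d/x^2$, which satisfies $\lambda^2 = 1 + 1/x^2$ and takes the values $\pm 1$ at $P_\infty^1$ and $P_\infty^2$ respectively. The computations $\rho_{a,b}^*\lambda = b^d\lambda$ and $\pi^*\lambda = -\lambda^{-1}$ then allow me to read off three facts: $\pi$ swaps $P_\infty^1$ and $P_\infty^2$; the pointwise stabilizer $\tilde H_1^{P_\infty^1}$ has order $2(q+1)$; and this stabilizer is cyclic, generated by $\pi\rho_{1,b_0}$ for a primitive $(q+1)$-th root of unity $b_0$ (one checks $(\pi\rho_{1,b_0})^2 = \rho_{-1,b_0^2}$ has order $\operatorname{lcm}(2,d)=q+1$).

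Setting $G := \aut(\cF_1)$, the plan is to show that the $G$-orbit of $P_\infty^1$ is exactly $\{P_\infty^1, P_\infty^2\}$ and that $|G_{P_\infty^1}| = 2(q+1)$; orbit-stabilizer then yields $|G| = 4(q+1)$. By Corollary~\ref{cor:semigrous_i=1}, the $G$-orbit meets $\Omega$ only in $\{P_\infty^1, P_\infty^2\}$, and since $H_1$ acts freely on the rational places outside $\Omega$, the orbit has size $2 + 2(q+1)\cdot k$ for some $k\geq 0$. By \cite[Theorem~11.49]{hirschfeld_algebraic_2008} together with the $p$-rank-zero condition of $\cF_1$, we have $G_{P_\infty^1} = S_p \rtimes C$ with $C$ cyclic of order divisible by $2(q+1)$ and coprime to $p$. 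Combining these restrictions with Riemann--Hurwitz applied to $\cF_1 \to \cF_1/G$, and using the short-orbit data already determined for $\cF_1 \to \cF_1/\tilde H_1$, forces $k=0$, $|C|=2(q+1)$, and $S_p$ trivial.

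The main obstacle is the bound $|C|\leq 2(q+1)$: ruling out a strictly larger tame cyclic stabilizer at $P_\infty^1$ requires a compatibility analysis between a hypothetical larger cyclic action and the explicit gap sequence of Proposition~\ref{prop:semigroups}, following the strategy of \cite{beelen_families_2024}.
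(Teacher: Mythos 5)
Your lower bound is fine and matches the paper: you verify $\langle H_1,\pi\rangle = H_1\rtimes\langle\pi\rangle$ has order $4(q+1)$, and your explicit computation that the stabilizer of $P_\infty^1$ in this subgroup is cyclic of order $2(q+1)$ generated by $\pi\rho_{1,b_0}$ is correct (and a nice supplement). The genuine gap is in the upper bound, which is the entire content of the theorem. The sentence ``Combining these restrictions with Riemann--Hurwitz applied to $\cF_1\to\cF_1/G$, and using the short-orbit data already determined for $\cF_1\to\cF_1/\tilde H_1$, forces $k=0$, $|C|=2(q+1)$, and $S_p$ trivial'' is an assertion, not an argument: Riemann--Hurwitz for the quotient by an unknown group $G$ does not by itself exclude $k\geq 1$ or a nontrivial $p$-part of the stabilizer. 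The paper needs substantially more here: if the orbit of $Q_\infty^1$ were longer than $2$, then $|G|\geq(2+2(q+1))(q+1)>84(g-1)$ for $q\geq 37$, so the classification of large automorphism groups \cite[Theorem 11.56]{hirschfeld_algebraic_2008} applies, and each of the three possible short-orbit configurations is eliminated by a case analysis on the genus of the fixed field of a Sylow $p$-subgroup of a stabilizer (using \cite[Theorems 11.49, 11.79, 11.91, Remark 11.95, Lemma 11.129]{hirschfeld_algebraic_2008} together with the structure of the $H_1$-orbits and Corollary \ref{cor:semigrous_i=1}); the range $5<q<37$ is then settled by a computer check, which your sketch does not address at all.

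Moreover, the step you flag as ``the main obstacle'' (ruling out a tame cyclic stabilizer larger than $2(q+1)$ via a compatibility analysis with the gap sequence of Proposition \ref{prop:semigroups}) is left undone, and in fact the paper resolves it by a different and elementary route that does not use the semigroup at that stage: once the orbit is known to be $\{Q_\infty^1,Q_\infty^2\}$, every element of the stabilizer $S$ fixes both places, so $S$ contains no element of order $p$ (by $p$-rank zero and \cite[Lemma 11.129]{hirschfeld_algebraic_2008}) and is cyclic; since $H':=H_1\cap S$ is then normal in $S$, the square of a generator of $S$ preserves the short $H'$-orbits, hence fixes the divisors of $x$ and $y$, hence acts diagonally on $(x,y)$ and lies in $H_1$, giving $|S|\leq 2(q+1)$. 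The semigroup information (Corollary \ref{cor:semigrous_i=1}) is used only to confine the $G$-orbit of $Q_\infty^1$ inside $\Omega$, not to bound the stabilizer. So your proposal reproduces the easy half and the general setup, but the two decisive steps are missing rather than proved.
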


\begin{proof}
    Define $G := \aut(\cF_1)$ and $g := g(\cF_1) = q-1$. Direct calculations show that $\langle H_1, \pi \rangle = H_1 \rtimes \langle \pi \rangle$, so $|G| \geq 4(q+1)$, and the theorem follows if we can show $|G| \leq 4(q+1)$. We check the result directly with a computer for $q < 37$, and for $q \geq 37$ we proceed by considering the orbit of $Q_\infty^1$: \newline
    
    Assume from now on that $q\geq 37$, and denote by $O_\infty$ the $G$-orbit containing both $Q_\infty^1$ and $Q_\infty^2$. By Corollary \ref{cor:semigrous_i=1} it cannot contain any other places from $\Omega$. If the orbit is of length more than two then, since $H_1$ acts with long orbits outside of $\Omega$, the orbit-stabilizer theorem yields
    \begin{align*}
        |G| = |O_\infty| \cdot |\aut(\cF_1)_{Q_\infty^1}| \geq (2 + 2(q+1)) (q+1) = (2g + 6)(g+2) > 84(g-1),
    \end{align*}
    because $q \geq 37$. Hence \cite[Theorem 11.56]{hirschfeld_algebraic_2008} applies, so $|G|$ is divisible by the characteristic $p$, and one of the following cases holds:
    
    \begin{enumerate}
        \item $G$ has exactly one short orbit,
        \item $G$ has exactly three short orbits, of which two have cardinality $|G|/2$, or
        \item $G$ has exactly two short orbits, of which at least one is non-tame, i.e., the order of the stabilizer of a place in the orbit is divisible by $p$.
    \end{enumerate}

    All places of $\Omega$ have a non-trivial stabilizer (they each contain a cyclic subgroup of $H_1$ of order $(q+1)$), so they must be contained in short orbits of $G$. This immediately excludes the first case because of Corollary \ref{cor:semigrous_i=1}. The second case also cannot occur; the stabilizers of each place in $\Omega$ is of order at least $q+1$, so this would again imply that all places of $\Omega$ are in the same orbit. We are left with Case (3): \newline
    
    Assume that $G$ gives rise to exactly two short orbits, $O_1$ and $O_2$, and that at least one of them, say $O_1$, is non-tame. The places of $\Omega$ cannot all be in the same orbit, again by Corollary \ref{cor:semigrous_i=1}, so there exists some $P \in \Omega \cup O_1$. By \cite[Theorem 11.49]{hirschfeld_algebraic_2008} we may write 
    $$
        \aut(\cF_1)_{P} = S_p \rtimes C,
    $$
    where $S_p$ is a Sylow $p$-subgroup of $\aut(\cF_1)_{P}$ and $C$ is cyclic or order not divisible by $p$. Note that the cyclic subgroup of $H_i$ which fixes $P$ is contained in $C$, so the order of $C$ is a multiple of $q+1$. Now, define $E_P$ to be the fixed field of $S_P$ in $\cF_1$, so that $\overline{C} := \aut(\cF_1)/S_p \simeq C$ is a cyclic subgroup of $\aut(E_P)$. We consider three different cases, depending on the genus of $E_P$: \newline

    \textbf{Case 1:} Assume $g(E_P) \geq 2$. Then we can apply \cite[Theorem 11.79]{hirschfeld_algebraic_2008} to obtain
    $$
        q+1 \leq |C| \leq 4g(E_P) + 4.
    $$
    On the other hand, the Riemann-Hurwitz formula applied to the extension $\cF_1/E_P$ yields
    $$
        2g - 2 \geq |S_P| (2g(E_P)-2) + (|S_P|-1).
    $$
    From combining the above we get
    $$
        q+1 \leq |C| \leq \frac{4q - 6}{|S_P|} + 6,
    $$
    which in turn implies $|S_P| < 5$, since $q \geq 37$.  

    Hence, only the case $|S_P| = p = 3$ remains, and in this case we have $|C| < \frac{4q-6}{3} -2 < 2(q+1)$. Since $|C|$ is a multiple of $q+1$, this implies $|C| = q+1$ so that $C\subseteq H_1$. Now, consider a generator $\tau$ of $S_3$. By definition $\tau$ fixes $P$, and since the $p$-rank of $\cF_1$ is zero it fixes no other places by \cite[Lemma 11.129]{hirschfeld_algebraic_2008}. In particular, $\tau$ acts with orbits of length three on the remaining five places of $\Omega$, so there must be a $\tau$-orbit containing both a place from $\Omega$ and a place not in $\Omega$. This is a contradiction since $C$ acts on the $S_P$-orbits, and $C$ acts with orbits of length at most two on places of $\Omega$ and orbits of length $q+1$ everywhere else. \newline

    \textbf{Case 2:} Assume $g(E_P) = 1$. Then \cite[Remark 11.95]{hirschfeld_algebraic_2008} implies that $q < 13$, but we are assuming $q \geq 37$. \newline

    \textbf{Case 3:} Assume $g(E_P) = 0$. Then \cite[Theorem 11.91]{hirschfeld_algebraic_2008} implies that $\overline{C}$ fixes exactly two places of $E_P$ and acts with long orbits everywhere else. This means that the cyclic group $H':= H_1 \cap C$ fixes exactly two $S_P$-orbits. One of them is $\{P\}$ and the other one must contain anything with a nontrivial $H'$-stabilizer. In particular, all the remaining places of $\Omega$ must be in the same $S_P$-orbit, and hence all of $\Omega$ is in the same $G$-orbit, but this is in contradiction with Corollary \ref{cor:semigrous_i=1}. \newline

    We obtain a contradiction in all cases, so we conclude that $O_\infty = \{Q_\infty^1, Q_\infty^2\}$. By the orbit-stabilizer theorem this implies 
    $$
        |G| = 2 |S|,
    $$
    where $S := \aut (\cF_1)_{Q_\infty^1}$. We know that $S$ contains a cyclic subgroup $H' := H_i \cap S$ of order $q+1$, and we will finish the proof by showing $|S| \leq 2|H'| = 2(q+1)$. \newline
    
    First note that the elements of $S$ fix both places in $O_\infty = \{Q_\infty^1, Q_\infty^2\}$. From \cite[Lemma 11.129]{hirschfeld_algebraic_2008} we therefore get that $S$ contains no element of order $p$, and it follows both that $G$ is tame and that $S_P$ is cyclic (by \cite[Theorem 11.49]{hirschfeld_algebraic_2008}). Now, consider a generator $\beta$ of $S$. Since $S$ is cyclic $H'$ is normal in $S$, so $S$ acts on the orbits of $H'$. In particular, $S$ acts on the set of short $H'$-orbits $\left\{ \{Q_0^1,Q_0^2\},\{Q_\alpha, Q_{-\alpha}\}\right\}$. It follows that $\beta^2$ fixes  the divisor of both $x$ and $y$, so we must have 
    $$
        \beta(x) = \lambda x \ \text{ and } \ \beta(y) = \mu y,
    $$
    for some $\lambda, \mu \in \Fqq$. From the defining equation of $\cF_1$ we obtain
    $$
        \mu^{q+1} y^{q+1} = \mu^{q+1} x^2(x^2 + 1) = \lambda^2 x^2(\lambda^2 x^2 + 1), 
    $$
    which is only possible if $\mu^{q+1} = \lambda^2 =  1$. We conclude that $\beta^2 \in H_1$, and since $\beta^2 \in S$ by definition, this shows $\beta^2 \in H'$. Finally, this implies 
    $$
        |G| = 2\cdot|S| \leq 2\cdot (2\cdot|H'|) = 4(q+1),
    $$
    as desired. We conclude that $|G| = 4(q+1)$ which means $G = \langle H_1, \pi\rangle = H_1 \rtimes \langle \pi \rangle$, and this finishes the proof.  
\end{proof}

We sum up the results regarding automorphism groups in the following theorem:

\begin{theorem}\label{thm:aut}
    Let $q$ be the power of an odd prime with $q > 5$, and suppose $1 \leq i \leq (d-3)/2$ with $\gcd(i(i+1),d)=1$. Then, keeping the notation from previously, the automorphism group of $\cF_i$ is given by 
    $$
        \aut(\cF_i) = 
        \begin{cases}
            H_i \rtimes \langle \pi \rangle & \text{ if } \ i=1, \\
            \hfil H_i \rtimes \langle \omega \rangle &\text{ if } \ i^2 + i + 1 \equiv 0 \pmod d, \text{ and } \\
            \hfil H_i &\text{ otherwise.}
        \end{cases}
    $$
    In particular, the order of the automorphism group is $4(q+1)$ if $i=1$, $3(q+1)$ if $i^2 + i + 1 \equiv 0 \pmod d$ and $q+1$ otherwise.
\end{theorem}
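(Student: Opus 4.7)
The plan is straightforward: this statement is a collation of the two main results established earlier in Section \ref{sec:aut}, so the proof amounts to verifying that those results together cover every admissible index $i$, and then reading off the orders from the established group structures. I would begin by partitioning the range $1 \leq i \leq (d-3)/2$ (subject to $\gcd(i(i+1),d) = 1$) into the disjoint sub-ranges $i = 1$ and $2 \leq i \leq (d-3)/2$.

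For the first sub-range, I would simply invoke the theorem proved in Subsection \ref{sec:special_i=1}, which establishes $\aut(\cF_1) = H_1 \rtimes \langle \pi \rangle$. For the second, I would cite the corollary immediately following Theorem \ref{thm:syl2_is_normal}, which furnishes the dichotomy $\aut(\cF_i) = H_i \rtimes \langle \omega \rangle$ when $i^2 + i + 1 \equiv 0 \pmod d$, and $\aut(\cF_i) = H_i$ otherwise.

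To ensure that the three cases listed in the theorem are genuinely mutually exclusive, I would check that $i = 1$ cannot simultaneously satisfy $i^2 + i + 1 \equiv 0 \pmod d$: the congruence would force $d \mid 3$, hence $d = 3$ and $q = 5$, contradicting the standing assumption $q > 5$. The orders then follow directly from the sizes of the respective semidirect products, using $|H_i| = 2(q+1)$ together with the fact that $\pi$ has order two and $\omega$ has order three. Since every step is either a direct citation of a previously proved result or a one-line numerical verification, there is no substantive obstacle; the role of this statement is purely to consolidate the two cases treated separately in the preceding subsections.
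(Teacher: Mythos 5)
Your overall route is the same as the paper's: Theorem \ref{thm:aut} is not given an independent proof there either, being a collation of the theorem of Subsection \ref{sec:special_i=1} (the case $i=1$) and of the corollary following Theorem \ref{thm:syl2_is_normal} (the cases $2 \leq i \leq (d-3)/2$), and your disjointness check (that $i=1$ together with $i^2+i+1 \equiv 0 \pmod d$ would force $d \mid 3$, hence $q=5$) is correct and harmless.

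The problem is your final step, where you assert that the stated orders ``follow directly from the sizes of the respective semidirect products, using $|H_i| = 2(q+1)$.'' They do not. With $H_i \simeq \mZ_2 \times \mZ_{q+1}$, so $|H_i| = 2(q+1)$, and $\pi$ of order two, $\omega$ of order three (neither lying in $H_i$), the identifications in the first part of the statement give $|H_i \rtimes \langle \pi \rangle| = 4(q+1)$, $|H_i \rtimes \langle \omega \rangle| = 6(q+1)$ and $|H_i| = 2(q+1)$, whereas the ``In particular'' sentence claims $4(q+1)$, $3(q+1)$ and $q+1$. So the facts you cite reproduce only the $i=1$ value and contradict the other two; the same factor-of-two discrepancy already appears in Section \ref{sec:aut}, where $\langle \omega, H_i \rangle$ is said to have order $3(q+1)$. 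A correct write-up must either confront this (i.e., note that, given the paper's own $H_i$, the order claims for the last two cases should read $6(q+1)$ and $2(q+1)$, or else that $|H_i|$ is being taken as $q+1$, which contradicts its definition and the $i=1$ count) or supply an argument for $3(q+1)$ and $q+1$ that you have not given and that is incompatible with the group-theoretic statement you just proved. Asserting that the numerical verification is a ``one-line'' formality is precisely where the proposal fails as a proof of the statement as written.
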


\section{Isomorphism classes}\label{sec:iso}

We determine the isomorphism classes among $\{\cF_i\}_i$ and calculate the number of distinct isomorphism classes. Note that the results are in accordance with the findings of \cite{giulietti_m=2_curves_2006} when $d$ is a prime. The main result is the following:

\begin{theorem}\label{thm:main_iso_classes}

    For $1 \leq i_1 < i_2 \leq \frac{d-1}{2}$ with $\gcd(i_1(i_1+1),d)=\gcd(i_2(i_2+1),d) = 1$, the function fields $\cF_{i_1}$ and $\cF_{i_2}$ are isomorphic if and only if
    
    \begin{align*}
        i_1i_2 \equiv 0 &\pmod d,\\
        i_1i_2 + i_1 + i_2 \equiv 0 &\pmod d,\\
        i_1i_2 + i_1 + 1 \equiv 0 &\pmod d, \text{ or }\\
        i_1i_2 + i_2 + 1 \equiv 0 &\pmod d.\\
    \end{align*}
\end{theorem}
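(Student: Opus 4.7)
The plan is to apply Corollary~\ref{cor:iso_subext} and Lemma~\ref{lemma:iso_subfields_onlyif} together: any isomorphism $\varphi : \cF_{i_1} \to \cF_{i_2}$ must preserve isomorphism classes of degree two subfields, and in the range where Corollary~\ref{cor:iso_subext} applies, both sides have only three such classes. The ``if'' direction is immediate from the explicit isomorphisms constructed in Section~\ref{sec:explicit_iso}, so the task is the ``only if'' direction.

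First I would treat the generic case $2 \le i_1 < i_2 \le (d-3)/2$. Pick any subfield $F \subset \cF_{i_1}$ associated to an involution of $H_{i_1}$. Its image $\varphi(F) \subset \cF_{i_2}$ is a degree two subfield, so by Corollary~\ref{cor:iso_subext} it is isomorphic to one of the three subfields of $\cF_{i_2}$ arising from involutions of $H_{i_2}$. Since $\varphi(F) \simeq F$, Lemma~\ref{lemma:iso_subfields_onlyif} applies and yields one of the four listed congruences or $i_1 = i_2$; the latter is excluded by the hypothesis $i_1 < i_2$.

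Next I would dispose of the boundary cases. For $i_1 = 1$ with $2 \le i_2 \le (d-3)/2$: by the analysis in Section~\ref{sec:subext}, $\cF_1$ contains a degree two subfield isomorphic to $F_{d-1}$ (the fixed field of $\sigma_1$, cf.\ Lemma~\ref{lemma:non_iso_conditions}(a)). Its image under $\varphi$ would be a degree two subfield of $\cF_{i_2}$ isomorphic to $F_{d-1}$, but Corollary~\ref{cor:iso_subext} combined with Lemma~\ref{lemma:non_iso_conditions} shows that $F_{d-1}$ arises this way only when $i_2 = 1$, a contradiction; a direct check confirms that none of the four congruences hold in this range, so the theorem is consistent. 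For $i_2 = (d-1)/2$ one uses the explicit isomorphism $\cF_{(d-1)/2} \simeq \cF_1$ (case (3) of Section~\ref{sec:explicit_iso} with $i=1$, $j=(d-1)/2$) to reduce to $\cF_{i_1} \simeq \cF_1$; either $i_1 \ge 2$, which is ruled out by the $F_{d-1}$-argument just given, or $i_1 = 1$, in which case $i_1 i_2 + i_1 + i_2 = d \equiv 0 \pmod d$ supplies the required congruence.

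The main obstacle is the case $i_1 = 1$: here the automorphism group is larger, the Sylow $2$-subgroup of $\aut(\cF_1)$ is not Klein four, and Corollary~\ref{cor:iso_subext} does not directly apply to $\cF_1$. The saving grace is that $F_{d-1}$ occurs as a degree two subfield of $\cF_i$ only for $i = 1$, so it serves as a clean distinguishing invariant that separates $\cF_1$ from every other $\cF_i$ in the prescribed range. Once this invariant is in hand, the whole argument reduces to bookkeeping among finitely many congruence classes modulo $d$.
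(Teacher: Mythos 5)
Your proof is correct, and it differs from the paper's in one meaningful place. For the generic range $2 \le i_1 < i_2 \le (d-3)/2$ you argue exactly as the paper does in substance: push a degree two subfield through the isomorphism, invoke Corollary \ref{cor:iso_subext} (equivalently Theorem \ref{thm:2sylow_is_klein}) on the target, and conclude via Lemma \ref{lemma:iso_subfields_onlyif}; the paper phrases this through Theorem \ref{thm:aut} and splits off the case $i^2+i+1\equiv 0 \pmod d$, but the underlying content is the same. The genuine divergence is the case $i_1=1$ (and its mirror $i_2=(d-1)/2$, which both treatments reduce to $\cF_1$ via the explicit isomorphism of Section \ref{sec:explicit_iso}). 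The paper separates $\cF_1$ from the other $\cF_{i}$ by the order of the full automorphism group, $|\aut(\cF_1)|=4(q+1)$ versus $q+1$ or $3(q+1)$, which rests on the hardest part of Theorem \ref{thm:aut}: the determination of $\aut(\cF_1)$ via the Weierstrass gap computations of Section \ref{sec:weierstrass} and Corollary \ref{cor:semigrous_i=1}. You instead use the occurrence of a degree two subfield isomorphic to $F_{d-1}$ as the distinguishing invariant: $\cF_1$ has one (the fixed field of $\sigma_1$, as in the proof of Lemma \ref{lemma:non_iso_conditions}), while for $2\le i_2\le (d-3)/2$ every degree two subfield of $\cF_{i_2}$ is, by Corollary \ref{cor:iso_subext}, isomorphic to one of the three $H_{i_2}$-subfields, none of which is $\simeq F_{d-1}$ by the ``moreover'' clause of Lemma \ref{lemma:non_iso_conditions}. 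This buys a proof of the isomorphism-class theorem that does not depend on knowing $\aut(\cF_1)$ at all (only on Theorem \ref{thm:2sylow_is_klein} for $i\ge 2$ and on the subfield lemmas), so it is lighter than the paper's route; the paper's route, having already established Theorem \ref{thm:aut} for its own sake, simply reuses it. Two small housekeeping points, neither a gap: the case $q=5$ is vacuous since the index range is then empty, and your consistency checks that no congruence holds in the non-isomorphic ranges are not logically needed for the ``only if'' direction (vacuous truth suffices), though they do confirm compatibility with the ``if'' direction.
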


\begin{proof}

    For $q=5$ there is nothing to show, so assume from now on that $q>5$. The ``if'' part is covered by the explicit isomorphisms given in Section \ref{sec:explicit_iso}.
    
    %The ``if'' part is covered by the explicit isomorphisms given in \cite[Lemma 7.1]{giulietti_m=2_curves_2006} (as indicated in Section \ref{sec:subext} they need to be altered slightly because of the sign difference in our choice of model). \newline
    
    The ``only if'' part follows from combining Theorem \ref{thm:aut} and Lemma \ref{lemma:iso_subfields_onlyif}. In fact, suppose that $\cF_{i_1}$ and $\cF_{i_2}$ are isomorphic. We consider three different cases: \newline
    
    \textbf{Case 1:} If $i_1 = 1$, then it follows from Theorem \ref{thm:aut} that $i_2 = \frac{d-1}{2}$, and we have $i_1i_2+i_1+i_2 \equiv 0 \pmod d$. \newline
    
    \textbf{Case 2:} If $i_1^2 + i_1 + 1 \equiv 0 \pmod d$, then it follows from Theorem \ref{thm:aut} that also $i_2^2 + i_2 + 1 \equiv 0 \pmod d$, and hence that the only involutions in $\aut(\cF_{i_1})$ and $\aut(\cF_{i_2})$ are those coming from $H_{i_1}$, respectively $H_{i_2}$. Applying Lemma \ref{lemma:iso_subfields_onlyif} now gives the desired result. In fact, it follows from the discussion in the proof of Lemma \ref{lemma:non_iso_conditions} that $i_1 = i_2$. \newline

    \textbf{Case 3:} Otherwise, it follows from Theorem \ref{thm:aut} that $\aut(\cF_{i_1}) = H_{i_1}$, and hence also $\aut(\cF_{i_2}) = H_{i_2}$. Applying Lemma \ref{lemma:iso_subfields_onlyif} now gives the desired result.    

\begin{comment}
    If the three degree two subfields of $\cF_{i_1}$ corresponding to involutions of $H_{i_1}$ and $\cF_{i_2}$ are pairwise non-isomorphic  Then we may consider a subfield $F'$ of $\cF_{i_1}$, which corresponds to an involution of $H_{i_1}$. The image of this subfield under the given isomorphism is a degree two subfield of $\cF_{i_2}$. By Theorem \ref{thm:2sylow_is_klein} this subfield, and hence also $F'$, is isomorphic to a subfield, $F''$, corresponding to an involution of $H_{i_2}$. We now finish the proof by applying Lemma \ref{lemma:iso_subfields_onlyif}.
\end{comment}
\end{proof}

The number of isomorphism classes in $\{\cF_i\}_i$ hence depends on the number of distinct solutions to $i^2 + i + 1 \equiv 0 \pmod d$. We determine this number in terms of the prime facotization of $d$.

\begin{lemma}\label{lemma:number_i^2+i+1_pi(d)}
    Assume $q>5$. Write $d = p_1^{\alpha_1}\cdots p_n^{\alpha_n}$ for distinct odd primes $p_1, \dots , p_n$ and $\alpha_1, \dots, \alpha_n \in \mZ_{\geq 0}$. Let $m_1$ (respectively $m_2$) be the number of primes among $p_1, \dots, p_n$ congruent to one (respectively two) modulo three. Then, the number of distinct solutions to $i^2 + i + 1 \equiv 0 \pmod d$ in $\{1, \dots, \frac{d-3}{2}\}$ is
    $$
        \pi(d) = 
        \begin{cases}
            0 &\text{if } 9\mid d \text{ or } m_2 \geq 1, \\
            2^{m_1 - 1} &\text{otherwise.}
        \end{cases}
    $$
\end{lemma}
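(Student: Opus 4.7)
The plan is to reduce the counting problem to local factors via the Chinese Remainder Theorem, handle the prime $3$ by an explicit computation where Hensel's lemma fails, and then pass from solutions modulo $d$ to representatives in the required range by a pairing argument. By CRT, the number of solutions of $i^2+i+1\equiv 0\pmod d$ is the product, over the prime powers $p_k^{\alpha_k}$ exactly dividing $d$, of the number of solutions modulo each $p_k^{\alpha_k}$.

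For the local factors I would distinguish three cases. If $p_k\equiv 2\pmod 3$, then the discriminant $-3$ is a quadratic non-residue modulo $p_k$, so there are no solutions already modulo $p_k$. If $p_k\equiv 1\pmod 3$, then $x^2+x+1$ has two distinct roots modulo $p_k$, and at each root the derivative $2x+1$ is nonzero (since $2x+1\equiv 0$ would force $x\equiv -1/2$ and then $x^2+x+1\equiv 3/4$, which is nonzero modulo any $p_k>3$), so Hensel's lemma lifts each uniquely to $p_k^{\alpha_k}$, yielding exactly two solutions there. For $p_k=3$ the polynomial factors as $x^2+x+1\equiv (x-1)^2\pmod 3$, so the only root modulo $3$ is $x\equiv 1$; setting $x=1+3t$ gives $x^2+x+1=3+9t+9t^2\equiv 3\pmod 9$, so there is no lift to modulo $9$. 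Consequently $9\mid d$ forces zero solutions, while $3\|d$ contributes a factor of $1$ to the CRT product.

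Assembling with CRT: if $9\mid d$ or $m_2\geq 1$, the total solution count modulo $d$ is zero; otherwise it is $2^{m_1}$. To restrict to $\{1,\dots,(d-3)/2\}$ I would use the involution $i\mapsto d-i-1$ on the solution set, which is well-defined because $(d-i-1)^2+(d-i-1)+1\equiv i^2+i+1\pmod d$. A fixed point would satisfy $i=(d-1)/2$, and a direct computation gives $((d-1)/2)^2+(d-1)/2+1\equiv 3/4\pmod d$, which vanishes only when $d\mid 3$; since $q>5$ forces $d\geq 7$, there are no fixed points. The $2^{m_1}$ solutions therefore split into $2^{m_1-1}$ unordered pairs $\{i,d-i-1\}$ summing to $d-1$, and exactly one element of each pair lies in $\{1,\dots,(d-3)/2\}$, yielding $2^{m_1-1}$ solutions in the desired range.

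The only real obstacle is the prime $3$: the standard Hensel lift fails there because the derivative vanishes at the double root, but the explicit expansion modulo $9$ dispatches it cleanly. The remaining parts are routine CRT bookkeeping together with the involution pairing, and the hypothesis $q>5$ is used only to guarantee that the pair $\{(d-1)/2,(d-1)/2\}$ does not collapse.
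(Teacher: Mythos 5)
Your proof is correct and follows essentially the same route as the paper: a CRT reduction to prime-power moduli, an explicit check that $i^2+i+1\equiv 0 \pmod 9$ has no solutions, and the pairing $i \mapsto d-1-i$ to pass from $2^{m_1}$ solutions modulo $d$ to $2^{m_1-1}$ in $\{1,\dots,\tfrac{d-3}{2}\}$; the only difference is that you count the local solutions for $p\equiv 1 \pmod 3$ via the discriminant $-3$ and Hensel's lemma, where the paper counts elements of order three in the cyclic group $(\mZ/p^k\mZ)^\times$ using $i^3-1=(i-1)(i^2+i+1)$ --- both standard and equally valid. One harmless slip: $q>5$ (with $d$ odd) gives $d\geq 5$ (e.g.\ $q=9$ yields $d=5$), not $d\geq 7$, but your fixed-point argument only needs $d>3$, so the conclusion stands.
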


\begin{proof}

%The equation can be rewritten as $(2i+1)^2 \equiv -3 \pmod d$, so the solutions correspond to solutions to $x^2 \equiv -3 \pmod d$. If $9\mid d$ then no such solution exists. Indeed, if $x^2 \equiv -3 \pmod 9$ then $3 \mid x$ which in turn implies $9 \mid 3$, a contradiction. If $m_2 \geq 1$ then $x^2 \equiv -3 \pmod p$ for some prime $p \equiv 2 \mod 3$, but using quadratic reciprocity we find that the Legendre symbol $( \frac{-3}{p})$ is $-1$, so again we arrive at a contradiction. 

%Suppose instead $m_2 = 0$ and $9 \nmid d$. Then, using again quadratic reciprocity and Hensel's lemma, we find that $x^2 \equiv -3 \pmod{p_k^{\alpha_k}}$ has exactly two solutions in $\{0,\dots,p_k^{\alpha_k}-1\}$ for $k = 1, \dots, n$ and $p_k \equiv 1 \pmod 3$. If $p_k^{\alpha_k} = 3$ there is exactly one solution. The Chinese Remainder theorem now implies that there are $2^{m_1}$ solutions to $x^2 \equiv -3 \pmod d$ in $\{0, \dots, d-1\}$, and accordingly $2^{m_1}$ solutions to $i^2 + i + 1 \equiv 0 \pmod d$.

    We first count solutions for $i\in \{0, \dots, d-1\}$. By the Chinese Remainder Theorem this can be reduced to counting solutions of $i^2 + i + 1 \equiv 0 \pmod{p^k}$ in $\{0,\dots, p^k-1\}$, for $p$ in $\{p_1, \dots, p_n\}$. 
    
    If $p = 3$ and $k=1$ there is exactly one solution, namely $i=1$. A direct check shows that $i^2 + i + 1 \equiv 0 \pmod 9$ never holds, so if $p = 3$ and $k \geq 2$ there are no solutions. 
    
    Suppose $p>3$, and note that then $i \equiv 1 \pmod p$ is never a solution. Since $(i^3-1) = (i-1)(i^2+i+1)$ this means that the solutions of $i^2 + i + 1 \equiv 0 \pmod{p^k}$ in $\{0,\dots, p^k-1\}$ correspond to elements of order three in $\left(\mZ/p^k\mZ\right)^\times$. This group is cyclic of order $p^{k-1}(p-1)$, so there are no elements of order three if $p \equiv 2 \pmod 3$, and exactly two elements of order three if $p \equiv 1 \pmod 3$. We conclude that the number of solutions to $i^2 + i + 1 \equiv 0 \pmod d$ in $\{0, \dots, d-1\}$ is zero if $9\mid d$ or $m_2 \geq 1$, and $2^{m_1}$ otherwise.

    To finish the proof, note that if $i^2 + i + 1 \equiv 0 \pmod d$ then $d-(i+1)$ is another solution. We assume $q > 5$, so this means that the solutions to $i^2 + i + 1 \equiv 0 \pmod d$ among $\{1, \dots, d-1\}$ come in pairs, with exactly one member of each pair being in $\{1, \dots, \frac{d-3}{2}\}$. The desired result now follows.
\end{proof}

As an easy consequence, we note that if $q$ is a power of $3$ then $d \equiv 2 \pmod 3$, so it is divisible by at least one prime congruent to $2$ modulo $3$, and hence $i^2 + i + 1 \equiv 0 \pmod d$ has no solutions. \newline

The number of isomorphism classes can now be determined:

\begin{theorem}\label{thm:number_iso_classes}
    Let $q > 5$ be the power of a prime with $q \equiv 1 \pmod 4$, $d := (q+1)/2$ odd, and $\{\cF_i\}_i$ as defined in Equation \ref{eq:Fi}. Write $d = p_1^{\alpha_1}\cdots p_n^{\alpha_n}$ for distinct odd primes $p_1, \dots , p_n$ and $\alpha_1, \dots, \alpha_n \in \mZ_{\geq 0}$. The number of isomorphism classes among the function fields $\{\cF_i\}_{i}$ is
    $$
        N(d) = \frac{\varphi_2(d) + 4\pi(d) + 3}{6},
    $$
    where $\pi(d)$ is as defined in Lemma \ref{lemma:number_i^2+i+1_pi(d)} and
    $$
        \varphi_2(d) = p_1^{\alpha_1-1}(p_2-2) \cdots p_n^{\alpha_n - 1}(p_n - 2).
    $$
\end{theorem}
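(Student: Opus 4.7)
The plan is to recast Theorem \ref{thm:main_iso_classes} as the statement that $\cF_{i_1} \cong \cF_{i_2}$ exactly when $i_1$ and $i_2$ lie in the same orbit of an explicit group action, and then apply Burnside's lemma (the Cauchy-Frobenius formula) to count orbits. Define
$$ S := \{ i \in \mZ/d\mZ : \gcd(i(i+1), d) = 1 \},$$
so that $|S| = \varphi_2(d)$ by the Chinese Remainder Theorem, since in $\mZ/p^\alpha\mZ$ the forbidden residues are precisely those with $i \equiv 0$ or $i \equiv -1 \pmod p$, leaving $p^{\alpha-1}(p-2)$ admissible ones. The six equivalences recorded in Section \ref{sec:explicit_iso} correspond to the six maps
$$ \mathrm{id}, \; \beta : i \mapsto -i-1, \; \alpha : i \mapsto i^{-1}, \; \alpha\beta : i \mapsto -(i+1)^{-1}, \; \beta\alpha : i \mapsto -i^{-1} - 1, \; \alpha\beta\alpha : i \mapsto -i(i+1)^{-1}.$$
A direct check gives $\alpha^2 = \beta^2 = (\alpha\beta)^3 = \mathrm{id}$, so $\Gamma := \langle \alpha, \beta\rangle \cong S_3$ and the six maps listed are exactly its elements; one also checks $\Gamma$ preserves $S$. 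Since every $i \in S$ is $\beta$-equivalent to some element of $\{1, \dots, (d-1)/2\} \cap S$, the number of $\Gamma$-orbits on $S$ equals $N(d)$.

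By Burnside, $N(d) = \tfrac{1}{6}\sum_{\gamma \in \Gamma}|\mathrm{Fix}(\gamma)|$. The identity contributes $\varphi_2(d)$. I claim each of the three involutions $\alpha, \beta, \alpha\beta\alpha$ has a unique fixed point in $S$. For $\alpha$, the equation $i^2 \equiv 1 \pmod d$ has $2^n$ solutions by CRT, but the requirement $\gcd(i+1, d) = 1$ rules out $i \equiv -1 \pmod p$ at every prime $p \mid d$, forcing $i \equiv 1 \pmod{p^\alpha}$ at each such $p$, and hence $i = 1$. For $\beta$, the equation $2i = -1$ yields $i = (d-1)/2$, which lies in $S$ because both $(d \pm 1)/2$ are coprime to the odd number $d$. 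For $\alpha\beta\alpha$, the fixed equation $-i(i+1)^{-1} = i$ simplifies to $i(i+2) \equiv 0$, giving $i \equiv -2 \pmod d$, which again lies in $S$ since $d$ is odd.

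For the two elements of order three, $\alpha\beta$ and $\beta\alpha$, the fixed-point equation reduces in both cases to $i^2 + i + 1 \equiv 0 \pmod d$. Any such $i$ is a unit with inverse $-i-1$, and $i + 1$ is a unit as well, since $i \equiv -1 \pmod p$ would give $i^2 + i + 1 \equiv 1 \pmod p$. By Lemma \ref{lemma:number_i^2+i+1_pi(d)} there are $\pi(d)$ solutions in $\{1, \dots, (d-3)/2\}$; each pairs with $-i - 1$ to yield a distinct companion in $\{(d+1)/2, \dots, d-2\}$, and no solution is self-paired because $i = (d-1)/2$ fails the equation once $d \geq 5$. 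Hence each $3$-cycle has $2\pi(d)$ fixed points in $S$, and assembling,
$$ N(d) = \frac{1}{6}\bigl(\varphi_2(d) + 3\cdot 1 + 2 \cdot 2\pi(d)\bigr) = \frac{\varphi_2(d) + 4\pi(d) + 3}{6},$$
as desired. The most delicate step is the fixed-point count for $\alpha$: the naive CRT prediction of $2^n$ square roots of unity is collapsed to a single element of $S$ by the constraint $\gcd(i+1,d) = 1$, and this is exactly what makes the numerator of the final formula independent of $n$.
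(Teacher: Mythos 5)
Your proposal is correct, and it reaches the formula by a genuinely different counting argument than the paper. The paper works entirely inside the fundamental domain $\{1,\dots,(d-1)/2\}$: after reducing to the $\frac{\varphi_2(d)+1}{2}$ admissible indices there, it identifies one class of size two ($\{\cF_1,\cF_{(d-1)/2}\}$), $\pi(d)$ singleton classes (the solutions of $i^2+i+1\equiv 0 \pmod d$), argues that every remaining class has size exactly three (which requires checking that $i$, $i'$, $i''$ are pairwise distinct), and then counts. You instead work with all residues $S=\{i\bmod d:\gcd(i(i+1),d)=1\}$, observe that the six equivalences of Theorem \ref{thm:main_iso_classes} together with $i\mapsto i$, $i\mapsto -i-1$ are exactly the elements of an $S_3$-action on $S$ (note you have silently, and correctly, read the paper's condition ``$i_1i_2\equiv 0$'' as the intended $i_1i_2\equiv 1$), and apply Burnside. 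Your fixed-point counts check out: each involution $i\mapsto i^{-1}$, $i\mapsto -i-1$, $i\mapsto -i(i+1)^{-1}$ has the unique fixed point $1$, $(d-1)/2$, $d-2$ respectively (the collapse of the $2^n$ square roots of unity to $i=1$ by the unit condition on $i+1$ is indeed the key point), and each $3$-cycle has the $2\pi(d)$ solutions of $i^2+i+1\equiv 0\pmod d$, all of which automatically lie in $S$; this uses Lemma \ref{lemma:number_i^2+i+1_pi(d)} and the pairing $i\leftrightarrow -i-1$ exactly as the paper does. What the Burnside route buys is that the degenerate classes (sizes one and two in the fundamental domain) and the generic size-three classes are handled uniformly, with no need for the paper's separate argument that $i,i',i''$ are distinct; what it costs is the small bookkeeping step, which you compress into one sentence, that isomorphism classes of $\{\cF_i\}_i$ correspond bijectively to $\Gamma$-orbits on $S$ — this needs both the explicit isomorphisms (valid for all admissible indices) for one direction and the transport of the ``only if'' of Theorem \ref{thm:main_iso_classes} from the fundamental domain to all of $S$ via $\beta$, which your reduction does supply. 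Note also that Burnside is valid for the abstract $S_3$-action regardless of faithfulness, so the relation check $\alpha^2=\beta^2=(\alpha\beta)^3=\mathrm{id}$ is all that is needed there.
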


\begin{proof}
    We follow the same strategy as in \cite[Theorem 5.3]{beelen_families_2024}. Using the first explicit isomorphism mentioned in Section \ref{sec:explicit_iso} we reduce the problem to counting isomorphism classes for $i \in \{0,1, \dots, d-1\}$. Among these numbers, there are exactly $\varphi_2(d)$ choices for $i$ such that $\gcd(i(i+1),d)=1$. By the second isomorphism mentioned in Section \ref{sec:explicit_iso}, we can reduce this to $\frac{\varphi_2(d)+1}{2}$ valid choices for $i \in \{0, \dots, \frac{d-1}{2}\}$. Now consider the function fields arising from these choices of $i$ only. Then, as noted in the proof of the above theorem, the set $\{\cF_1,\cF_{(d-1)/2}\}$ is an isomorphism class, and so is $\{\cF_i\}$ for any $i$ satisfying $i^2 + i + 1 \equiv 0 \pmod d$. We claim that all other isomorphism classes will have size three:
    
    In fact, suppose $i\in \{2, \dots, \frac{d-3}{2}\}$ satisfies $\gcd(i(i+1),d) = 1$ and $i^2 + i + 1 \not\equiv 0 \pmod d$. Then Theorem \ref{thm:main_iso_classes} shows $\cF_i$ is isomorphic to $\cF_{i'}$ and $\cF_{i''}$, where $i'$ is equal to $j$ as defined in either Case 1 or Case 2 from Section \ref{sec:subext} (depending on the inverse of $i \pmod d$), and $i''$ is equal to $j$ as defined in either Case 3 or Case 4 (depending on the inverse of $i+1 \pmod d$). The assumptions on $i$ guarantee that $i$, $i'$ and $i''$ are distinct. \newline

    From the above observations we conclude that the number of isomorphism classes is 
    
    $$
        1 + \pi(d) + \frac{1}{3}\left(\frac{\varphi_2(d)+1}{2} - 2 - \pi(d) \right) =  \frac{\varphi_2(d) + 4\pi(d) + 3}{6}, 
    $$
    
    where $\pi(d)$ is as defined in Lemma \ref{lemma:number_i^2+i+1_pi(d)}.  
\end{proof}

Note that $\varphi_2(d) = d-2$ when $d$ is prime, so in this case we recover the formula for $N(d)$ given in \cite[Theorem 1.1]{giulietti_m=2_curves_2006}.

\section*{Acknowledgements}
The author would like to thank Maria Montanucci and Peter Beelen for helpful discussions and insightful suggestions throughout the work that led to this paper.

\bibliographystyle{abbrv}
\bibliography{ref}

\end{document}